\newtheorem{theorem}{Theorem}[section]
\newtheorem{lemma}[theorem]{Lemma}
\theoremstyle{definition}
\newtheorem{remark}[theorem]{Remark}
\newtheorem{example}[theorem]{Example}
\newtheorem{assum}{Assumption}[section]
\newcommand{\R}{\mathbb{R}}
\numberwithin{equation}{section}
\def\namedlabel#1#2{\begingroup
	#2%
	\def\@currentlabel{#2}%
	\phantomsection\label{#1}\endgroup
}
\begin{document}

\title{Slow and fast dynamics in measure functional differential equations with state--dependent delays through averaging principles and applications to
extremum seeking}

\author{Jaqueline G. Mesquita$^{a}$, \ Tiago Roux Oliveira$^{b}$, \ Henrique C. dos Reis$^{a}$ \\ \\
	{\small{ $^{a}$Universidade de Bras\'{\i}lia, Departamento de Matem\'atica, Campus Universit\'ario}}\\ 
	{\small{ Darcy Ribeiro, Asa Norte 709109--900, Bras\'{\i}lia--DF, Brazil.}}\\ 
	{\small{\textcolor{black}{E-mail: jgmesquita@unb.br, henrique.costa.reis@hotmail.com}}}\ \\ \\
	{\small{\textcolor{black}{$^{b}$Universidade do Estado do Rio de Janeiro, Faculdade de Engenharia,}}}\\ 
	{\small{\textcolor{black}{20550--900, Rio de Janeiro--RJ, Brazil.}}}\\
	{\small{\textcolor{black}{E-mail: tiagoroux@uerj.br}}}
	}

\date{}

\maketitle

\begin{abstract}
This paper investigates a new class of equations called measure functional differential
equations with state-dependent delays. We establish the existence and
uniqueness of solutions and present a discussion concerning the appropriate phase
space to define these equations. Also, we prove a version of periodic averaging principle
to these equations. This type of result was completely open in the literature.
These equations involving measure bring the advantage to encompass others such
as impulsive, dynamic equations on time scales and difference equations, expanding
their application potential. Additionally, we apply our theoretical insights to a
real-time optimization strategy, using extremum seeking to validate the stability of
an innovative algorithm under state-dependent delays. This application confirm the
relevance of our findings in practical scenarios, offering valuable tools for advanced
control system design. Our research provides significant contributions to the mathematical
field and suggests new directions for future technological developments.\\ \\
\textbf{Keywords:}~periodic motion, averaging principle, state-dependent delays, measure equations, extremum seeking.
\end{abstract}

\section{Introduction}

Since the 1960s (\cite{38}), it has been well established that many real-world systems can
be effectively modeled by retarded functional differential equations with state-dependent
delays. For numerous examples and models that are better described by such equations,
we refer the reader to the comprehensive Chapter by Hartung et al. (\cite{20}). This growing
recognition has led to an increasing focus on the theory of functional differential equations (FDEs) 
with state-dependent delays, attracting significant attention from researchers over the past
few decades. Key contributions to the foundational aspects of this theory can be found
in \cite{4, 5, 12, 19, 49}, among others.

More recently, there has been progress in extending this theory to encompass partial
differential equations and integro-differential equations with state-dependent delays. For
an overview of the latest developments in this area, we direct the reader to \cite{1, 2, 3, 5, 21,
22, 23, 37} and the references therein. Additionally, there has been considerable interest in
the development of stability and control theory for systems governed by state-dependent
retarded differential equations. Recent advances in this field can be found in works such
as \cite{6, 46, 48, 50}. However, the framework to deal with these equations always consider
space of piecewise continuous functions, continuous functions or even $C^{1}$ functions, not
considering equations with regulated functions, which extend all the previous one. This
necessity comes from the fact to investigate measure type of equations, which is our main
goal here.

More precisely, we are interested here in the following types of equations involving
measure
 \begin{equation}
 	\begin{array}{rcl}
 		\label{eq1.1_intro}
 		x(t) &=& x(t_0)+ \displaystyle\int_{t_0}^{t} f(s,x_{\rho(s,x_s)}){\rm d}g(s), \quad t \in [t_0,t_0+\sigma], \\ x_{t_0}&=&\phi,
 	\end{array}
 \end{equation}
 where $\sigma>0$, $t_0 \in \mathbb{R}$, $g\colon [t_0,t_0+\sigma] \to \mathbb{R}$ is a  nondecreasing function, $\mathcal{B} \subset G((-\infty,0],\mathbb{R}^n)$ is a particular Banach space satisfying axioms \ref{a1}--\ref{A3}, which will be explained later, $\phi \in \mathcal{B}$ and $x\colon (-\infty,t_0+\sigma] \to \mathbb{R}^n$, $\rho\colon [t_0,t_0+\sigma]\times \mathcal{B} \to \mathbb{R}$ and 
 $f\colon  [t_0,t_0+\sigma]\times \mathcal{B} \to \mathbb{R}^n$ are functions.

These equations introduce a significant novelty in the literature, as measure equations
have not yet been explored in the context of state-dependent delays, being for the first time
in the present paper. The main challenge stems from the functional spaces in which the
involved functions are defined. Discontinuities in these functions complicate the analysis,
particularly when composing two such functions. For instance, in this paper, we work with
regulated functions, but the composition of two regulated functions does not necessarily
yield a regulated function. Consequently, defining solutions in this space requires careful
investigation of an appropriate phase space to ensure the well-posedness and smooth
operation of the solution process.

Some of the earliest advances to the theory of measure differential equations (MDEs)
are due to W. W. Schmaedeke, R. R. Sharma, and P. C. Das (see \cite{11, 44, 45}). Over time,
several researchers have significantly advanced the qualitative theory of these equations,
with a focus on existence of solutions, stability theory, and their applications to systems
exhibiting discontinuous behavior. Notable contributions in this area include \cite{9, 13}. A
key motivation for studying these equations lies in the fact that these equations may encompass
other types of equations such as: dynamic equations on time scales and impulsive
equations. The same can be viewed in the case of state–dependent delays, bringing much
more generality for the results, since they can be rewritten for these equations as special
cases, in the context of state–dependent delays.

On the other hand, averaging principle play an important role for simplifying the analysis
of nonautonomous differential systems by approaching them, under certain assumptions,
into autonomous systems, reducing time-varying perturbations to time-invariant
ones with minimal error.

In this paper, our goal is to investigate a version of periodic averaging principle for
measure functional differential equations with state–dependent delays. Important results
on averaging principle on functional equations can be found in \cite{17, 18, 29, 30, 31, 32}.

Despite the well-established nature of averaging for various equations, its application
to functional differential equations with state-dependent delays has remained challenging
due to the regularity of the system and the complexity of the spaces involved. This paper
aims to introduce a result on existence and uniqueness of solutions of equation (\ref{eq1.1_intro}) and
to prove periodic averaging methods to measure functional differential equations with
state-dependent delays. More precisely, we ensure that, under certain conditions, the
solutions of the system below
	\begin{equation}\label{eq-1_intro}
\left\{
	\begin{array}{rcl}
	x(t) &=& \displaystyle x(0) +\varepsilon\int_{0}^{t} f\left(s, x_{\rho(s,x_s,\varepsilon)}\right){\rm d}h(s) + \varepsilon^2\int_{0}^{t}g\left(s, x_{\rho(s,x_s,\varepsilon)},\varepsilon \right){\rm d}h(s), \\ x_{0}&=& \phi,
	\end{array}
\right.
	\end{equation}
may be approached by the solutions of the system below
\begin{equation}\label{eq-2_intro}
\left\{
	\begin{array}{rcl}
	y(t) &=& \displaystyle y(0) +\varepsilon\int_{0}^{t} f_0\left(y_{\rho(s,y_s, \varepsilon)}\right){\rm d}s \\ y_{0}&=& \phi,
	\end{array}
\right.
	\end{equation}
which is simpler to deal, and provides a tool to understand the asymptotic behavior of
the solution of (\ref{eq-1_intro}).	


Our work also demonstrates the effectiveness of periodic averaging in simplifying these
equations, enabling stability analysis, bifurcation studies, and synchronization phenomena
in delayed systems. This method is particularly powerful for developing control strategies
in diverse fields such as neuroscience, communication networks, and ecological modeling.
On the other hand, the integration of the Perron integral, considered here in this paper,
particularly in handling non-standard integrands in averaged models, such as the Kapitza
pendulum, offers further refinement in the analysis of dynamic systems and for this type
of equations. This method has proven particularly useful in addressing the challenges
posed by state-dependent delays and non-periodic perturbations in complex systems.

Applying the averaging result, we present an extremum seeking algorithm \cite{AOH:2021,39} for real–
time optimization of static maps under nonconstant state–dependent delays and prove its
convergence. This kind of result was completely open in the literature in the context of
state–dependent delays until now \cite{OK:2015_G,OK:2015_N,40}.

In summary, this paper advances the theoretical framework of averaging methods in
differential equations, providing novel extensions and addressing challenges. The results
have broad implications across various scientific fields, from optimizing renewable energy
systems to understanding dynamic responses in electrodynamics and mathematical
biology.

\textbf{Preliminaries.}~~ The main references for this subsection are \cite{14,43}.


A {\em tagged division} of  $\left[ a,b\right]
$ is a finite collection of point--interval pairs
$D=\left( \tau _{i},[s_{i-1},s_{i}]\right) $, where $a=s_{0}\leq
s_{1}\leq \ldots \leq s_{\left|D\right|}=b$ is a division of $\left[a,b\right]$
and $\tau _{i}\in \left[ s_{i-1},s_{i}\right]$, $i=1,2,\ldots,\left|D\right|$, where the symbol $\left|D\right|$ denotes the number of subintervals in which $[a,b]$ is divided.

A {\em gauge} on a set $B\subset \left[ a,b\right] $ is any
function $\delta :B\rightarrow (0,\infty) $. Given a gauge
$\delta $ on $\left[ a,b\right] $, we say that a tagged division $D=\left(
\tau_{i},[s_{i-1},s_{i}]\right)$  is $\delta${\em--fine} if for
every $i\in\left\{1,2,\ldots,\left|D\right|\right\}$, we have
$\left[ s_{i-1},s_{i}\right] \subset (\tau_{i}-\delta(\tau_{i}),\tau_{i}+\delta(\tau_i)).$

A function $f \colon [a,b] \to X$ is called {\em Perron--Stieltjes} integrable on $[a,b]$ with respect to a function $g: [a,b] \to \R,$ if there is an element $I\in X$ such that for every $\varepsilon>0,$ there is a gauge $\delta \colon [a,b]\rightarrow (0,\infty)$  such that
\[\left\|\sum\limits_{i=1}^{\left|D\right|}f(\tau_i)\left(g(s_i)-g(s_{i-1})\right) - I\right\|<\varepsilon,\]for all $\delta$--fine tagged partition of $[a,b].$ In this case, $I$ is called {\em Perron--Stieltjes} integral of $f$ with respect to $g$ over $[a,b]$ and it will be denoted by $\int^b_a f(s)\,{\rm d}g(s),$ or simply $\int^b_a f\,{\rm d}g.$

A function $f \colon [a,b]\to X$ is called \textit{regulated} if both lateral limits $$f(t^-) = \displaystyle
\lim_{s \to t^-} f(s),\ \ \  t\in (a,b] \qquad \text{and} \qquad f(t^+) = \displaystyle
\lim_{s \to t^+} f(s),\ \ \ t\in[a,b)$$exist. The space of all regulated functions $f: [a,b]\to X$ will be denoted by $G([a,b],\R^n),$ which is a Banach space when endowed with the usual supremum norm  $\| f \|_{\infty}=\sup\limits_{s\in[a,b]}\left|f(s)\right|.$

A set $\mathcal{A}\subset G([a,b],\R^n)$ is called \textit{equiregulated}, if it has the following property: for every $\varepsilon >0$ and $t_0\in [a,b],$  there is a $\delta>0$ such that
\begin{enumerate}[label=(\roman*)]
	\item if $x\in\mathcal{A},$ $s\in [a,b]$ and $t_0 -\delta <s< t_0,$ then $\|x(t^{-}_0)-x(s)\|<\varepsilon$, whereas 
	
	\item if $x\in\mathcal{A},$ $s\in [a,b]$ and $t_0 <s< t_0 +\delta,$ then $\|x(t^{+}_0)-x(s)\|<\varepsilon.$
\end{enumerate}

\begin{theorem}[\protect{\cite[Theorem 2.18]{14}}]\label{relativamente-compacto}
	The following conditions are equivalent.
	\begin{itemize}
		\item[\rm{(i)}] $\mathcal{A}\subset G([a,b],\R^n)$ is relatively compact.
		
		\item[\rm{(ii)}]  The set $\{x(a):x\in\mathcal{A}\}$ is bounded and there is an increasing continuous function $\eta: [0, \infty) \to [0, \infty)$, $\eta(0) = 0$ and there is an increasing function $K:[a,b] \to \R$ such that
		\[\|x(\tau_2)-x(\tau_1)\|\leq \eta(K(\tau_2)- K(\tau_1)),\]for all $x\in \mathcal{A}$ and all $a\leq \tau_1\leq \tau_2\leq b.$
	\end{itemize}
\end{theorem}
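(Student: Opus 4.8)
The plan is to prove the two implications separately: \textbf{(ii)$\Rightarrow$(i)} is a soft, Arzel\`a--Ascoli--type argument, while \textbf{(i)$\Rightarrow$(ii)} is where the real work lies, namely manufacturing a single increasing $K$ and a single continuous modulus $\eta$ out of relative compactness.

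\textbf{(ii)$\Rightarrow$(i).} First I would note that (ii) forces $\mathcal A$ to be uniformly bounded, since $\|x(t)\|\le\|x(a)\|+\eta(K(b)-K(a))$ for every $x\in\mathcal A$ and $t\in[a,b]$, with finite right-hand side. Next, given $\varepsilon>0$, I would use the continuity of $\eta$ at $0$ to pick $\gamma>0$ with $\eta(\gamma)<\varepsilon$, and then choose a finite division $a=q_0<\dots<q_r=b$ whose points include every point where the increasing function $K$ jumps by more than $\gamma/2$ and which is fine enough that $K$ increases by less than $\gamma$ across each open subinterval $(q_{i-1},q_i)$ --- possible because $K$ is increasing with finite total increase. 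Then for any $x\in\mathcal A$ and any $\tau_1,\tau_2\in(q_{i-1},q_i)$ one gets $\|x(\tau_2)-x(\tau_1)\|\le\eta(K(\tau_2)-K(\tau_1))\le\eta(\gamma)<\varepsilon$, so $\mathcal A$ is bounded and equiregulated. To conclude relative compactness I would run the usual diagonal argument: from any sequence in $\mathcal A$, by boundedness, extract a subsequence converging pointwise on a countable set $Q$ dense in $[a,b]$ and accumulating at every division point (over all scales $\varepsilon=1/m$) from both sides; the uniform oscillation bounds then upgrade this to uniform convergence on $[a,b]$.

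\textbf{(i)$\Rightarrow$(ii).} Boundedness of $\{x(a):x\in\mathcal A\}$ is immediate. For the modulus, I would first convert total boundedness into a uniform oscillation estimate: for each $n\in\mathbb N$ cover $\mathcal A$ by finitely many balls of radius $2^{-n-2}$ centred at elements of $\mathcal A$, approximate each centre (a regulated function) uniformly within $2^{-n-2}$ by a step function, and let $P_n$ be the union of the division points of those step functions together with $\{a,b\}$; replacing $P_n$ by $\bigcup_{k\le n}P_k$, one obtains an increasing sequence of finite sets such that every $x\in\mathcal A$ oscillates by at most $2^{-n}$ on each open gap of $P_n$. Writing $P=\bigcup_nP_n$, $n(c)=\min\{n:c\in P_n\}$ for $c\in P$, and $\mu_n=2^{-n}/|P_n|>0$ (a nonincreasing sequence), I would set
\[ K(t)=\sum_{c\in P,\ c\le t}\mu_{n(c)}+\sum_{c\in P,\ c<t}\mu_{n(c)}, \]
which is well defined since $\sum_{c\in P}\mu_{n(c)}\le\sum_n\mu_n|P_n|=\sum_n2^{-n}<\infty$; then $K$ is increasing on $[a,b]$, continuous off $P$, and has left-jump and right-jump each equal to $\mu_{n(c)}>0$ at every $c\in P$. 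The key property is: if $\tau_1<\tau_2$ and $K(\tau_2)-K(\tau_1)<\mu_m$, then $[\tau_1,\tau_2]$ contains no point of $P$ of level $\le m$ (any such point would contribute at least $\mu_{n(c)}\ge\mu_m$ to $K(\tau_2)-K(\tau_1)$), hence $[\tau_1,\tau_2]$ avoids $P_{m-1}$ entirely, so $\tau_1,\tau_2$ lie strictly inside one open gap of $P_{m-1}$, whence $\|x(\tau_2)-x(\tau_1)\|\le2^{-(m-1)}$ for all $x\in\mathcal A$. Finally I would let $\eta$ be an increasing continuous majorant with $\eta(0)=0$ of the nondecreasing step function equal to $2^{-(m-1)}$ on $[\mu_{m+1},\mu_m)$ and capped at the (finite) diameter of $\mathcal A$ for arguments $\ge\mu_1$ --- this is genuinely continuous at $0$ precisely because $2^{-(m-1)}\to0$ --- and then $\|x(\tau_2)-x(\tau_1)\|\le\eta(K(\tau_2)-K(\tau_1))$ holds for all $x\in\mathcal A$ and all $a\le\tau_1\le\tau_2\le b$.

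The main obstacle, and the step demanding the most care, is exactly this construction of the pair $(K,\eta)$ in \textbf{(i)$\Rightarrow$(ii)}: one must extract from \emph{total} boundedness a \emph{single} increasing real-valued $K$ that simultaneously resolves the discontinuities of \emph{every} member of the family at every scale, and weight its jumps so that they remain summable (so that $K$ is a bona fide increasing function) yet are still large enough, relative to the scale $2^{-n}$, for the induced modulus $\eta$ to be continuous at the origin. Reading off boundedness, pointwise boundedness and the uniform oscillation estimates from compactness is routine; the delicate accounting is the matching of the jump weights $\mu_{n(c)}$ to the oscillation scale so that $\eta(0^{+})=0$.
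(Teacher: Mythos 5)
The paper does not prove this statement: it is imported verbatim as Theorem 2.18 of Franková's paper on regulated functions (reference [14]), so there is no in-paper proof to compare against. Your sketch is a correct reconstruction of the standard argument: (ii)$\Rightarrow$(i) via uniform boundedness plus equiregulatedness and an Arzel\`a--Ascoli-type diagonal extraction, and (i)$\Rightarrow$(ii) via step-function approximation of finite $2^{-n-2}$-nets to get uniform oscillation bounds on the gaps of finite sets $P_n$, followed by the weighted-jump construction of $K$; the key estimate (that $K(\tau_2)-K(\tau_1)<\mu_m$ forces $[\tau_1,\tau_2]$ to miss $P_m$ because each point of level $\le m$ in $[\tau_1,\tau_2]$ contributes at least $\mu_m$ to the increment) is verified correctly, and the summability $\sum_n \mu_n |P_n|=\sum_n 2^{-n}<\infty$ together with $2^{-m}\to 0$ is exactly the accounting needed to make $K$ finite and $\eta$ continuous at $0$. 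Two cosmetic points to tidy up: your $K$ is only nondecreasing (constant off $P$) and the paper's application builds $K(t)=h(t)+t$ to be strictly increasing, so add $t$ to your $K$ (this only strengthens the inequality since $\eta$ is increasing and the key implication is preserved); and the reference to $P_{m-1}$ creates an undefined $P_0$ at $m=1$, which disappears if you use the sharper conclusion you actually proved, namely that $[\tau_1,\tau_2]$ avoids $P_m$ itself, giving oscillation $\le 2^{-m}$.
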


\begin{theorem}[\cite{43}, Gronwall Inequality for Perron-Stieltjes]\label{Gronwall}
	Let $g\colon [a,b] \to [0,\infty)$ be a nondecreasing and left--continuous function, $k\geq0$ and $l> 0.$ Assume that $\psi\colon[a,b] \to [0,\infty)$ satisfies
	\[\psi(\xi)\leq k +l\int_a^{\xi} \psi (s)\mathrm{d}g(s),\quad \xi\in[a,b].\]Then $\psi(\xi)\leq k e^{l(g(\xi)-g(a))}$ for all $\xi\in[a,b].$
\end{theorem}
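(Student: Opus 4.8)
The plan is to control the right-hand side of the hypothesis by iterating the inequality and summing the resulting series against powers of $g$. First I would put $\varphi(\xi)=k+l\int_a^\xi\psi(s)\,\mathrm{d}g(s)$, so that $\psi(\xi)\le\varphi(\xi)$ on $[a,b]$; since $\psi\ge 0$ and $g$ is nondecreasing, $\varphi$ is itself nondecreasing with $\varphi(a)=k$, hence bounded on $[a,b]$, and replacing $\psi$ by $\varphi$ under the integral gives
\[
\varphi(\xi)\ \le\ k+l\int_a^\xi\varphi(s)\,\mathrm{d}g(s),\qquad \xi\in[a,b].
\]
Existence and finiteness of all the Perron--Stieltjes integrals appearing below is not an issue, since $\varphi$ is regulated (being monotone) and $g$ has bounded variation.

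The core of the argument is the estimate, valid for every $n\in\mathbb{N}$ and $\xi\in[a,b]$,
\[
\int_a^\xi\!\int_a^{s_1}\!\cdots\!\int_a^{s_{n-1}}\mathrm{d}g(s_n)\cdots\mathrm{d}g(s_1)\ \le\ \frac{(g(\xi)-g(a))^n}{n!},
\]
which I would prove by induction on $n$, the inductive step reducing to the one-dimensional inequality $\int_a^\xi (g(s)-g(a))^{n-1}\,\mathrm{d}g(s)\le\frac1n(g(\xi)-g(a))^n$. This is exactly where left-continuity of $g$ enters: at a jump point $c$ the integrand $(g(\cdot)-g(a))^{n-1}$ is evaluated at $g(c^-)=g(c)$, and the elementary bound $\alpha^{n-1}(\beta-\alpha)\le\frac1n(\beta^n-\alpha^n)$ for $0\le\alpha\le\beta$ keeps the inequality pointing the right way across atoms of $g$, while on the continuous part it is a change of variables; if $g$ were merely right-continuous the inequality would fail. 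I expect this lemma to be the main obstacle, as it is the only place where the discrete/continuous interplay of the Stieltjes measure must be handled with care.

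Granting the lemma, I would iterate the displayed inequality for $\varphi$ exactly $n$ times to obtain
\[
\varphi(\xi)\ \le\ k\sum_{j=0}^{n-1}\frac{l^{\,j}\,(g(\xi)-g(a))^j}{j!}\ +\ l^{\,n}\!\int_a^\xi\!\cdots\!\int_a^{s_{n-1}}\varphi(s_n)\,\mathrm{d}g(s_n)\cdots\mathrm{d}g(s_1),
\]
and estimate the remainder by $\varphi(b)\,l^{\,n}\,(g(\xi)-g(a))^n/n!$, which tends to $0$ as $n\to\infty$ by the lemma. Passing to the limit yields $\varphi(\xi)\le k\sum_{j\ge0}l^{\,j}(g(\xi)-g(a))^j/j!=k\,e^{\,l(g(\xi)-g(a))}$, and since $\psi\le\varphi$ the claimed bound follows; the degenerate case $k=0$ is covered automatically, the same computation then forcing $\psi\equiv 0$.
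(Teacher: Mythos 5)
The paper does not prove this statement; it is quoted verbatim from Schwabik's monograph \cite{43} and used as a black box, so there is no in-paper argument to compare against. Judged on its own, your iteration (Picard/Neumann-series) argument is correct and is the standard route to this result. You have correctly isolated the one genuinely delicate point: the one-dimensional estimate $\int_a^\xi (g(s)-g(a))^{n-1}\,\mathrm{d}g(s)\le \tfrac1n (g(\xi)-g(a))^n$, where an atom of $g$ at $c$ contributes $h(c)\,(g(c^+)-g(c))$ because left-continuity forces $g(c^-)=g(c)$, so the bound $\alpha^{n-1}(\beta-\alpha)\le\tfrac1n(\beta^n-\alpha^n)$ applies with $\alpha=g(c)-g(a)\le\beta=g(c^+)-g(a)$; your observation that the inequality reverses for a right-continuous integrator (where the tag sees $\beta^{n-1}$ instead of $\alpha^{n-1}$) is also accurate, and indeed the theorem itself fails without left-continuity. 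The remaining steps — monotonicity of $\varphi$, integrability of the iterated integrands (monotone, hence regulated, against an integrator of bounded variation), the remainder bound $\varphi(b)\,l^n(g(\xi)-g(a))^n/n!\to0$ — are all in order; the only implicit hypothesis you rely on is that $\int_a^b\psi\,\mathrm{d}g$ exists and is finite, which is already presupposed by the statement of the theorem.
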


\begin{theorem}[Schauder  Fixed--Point  Theorem]\label{punto-fijo}
	Let $(E, \|  \cdot  \|)$ be a normed vector space, $S$ be a  nonempty  convex and closed  subset  of  $E$ and $T:S \to S$ is a continuous function such that $T(S)$ is relatively compact. Then  $T$  has  a  fixed  point  in  $S.$
\end{theorem}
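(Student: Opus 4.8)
The final statement is the classical Schauder fixed--point theorem, and the plan is to prove it by the standard reduction to Brouwer's theorem through finite--dimensional approximation. First I would dispose of the ``relatively compact'' hypothesis by passing to a compact convex set. Put $K:=\overline{\mathrm{conv}}\,(T(S))$. Since $T(S)$ is relatively compact, its closed convex hull $K$ is compact (the convex hull of a totally bounded set is totally bounded, and one takes the closure in the complete space at hand), and $K\subseteq S$ because $S$ is closed, convex, and contains $T(S)$. Hence $T$ restricts to a continuous self--map $T\colon K\to K$, and it suffices to produce a fixed point of this restriction; any such point lies in $K\subseteq S$.

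Next I would construct, for each $n\in\mathbb{N}$, the Schauder projection. Using compactness of $K$, choose a finite $\tfrac{1}{n}$--net $\{x_1,\dots,x_{m_n}\}\subseteq K$, let $K_n:=\mathrm{conv}\{x_1,\dots,x_{m_n}\}$, and define $P_n\colon K\to K_n$ by
\[
P_n(x)=\frac{\sum_{i=1}^{m_n}\mu_i(x)\,x_i}{\sum_{i=1}^{m_n}\mu_i(x)},\qquad
\mu_i(x)=\max\bigl\{0,\ \tfrac{1}{n}-\|x-x_i\|\bigr\}.
\]
The denominator never vanishes because the $x_i$ form a $\tfrac1n$--net, so $P_n$ is well defined and continuous; moreover $P_n(x)$ is a convex combination of points $x_i$ each within distance $\tfrac1n$ of $x$, whence $\|P_n(x)-x\|\le\tfrac1n$ for all $x\in K$.

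Now set $T_n:=P_n\circ T|_{K_n}\colon K_n\to K_n$. Since $K_n$ is a compact convex subset of the finite--dimensional subspace $\mathrm{span}\{x_1,\dots,x_{m_n}\}$ and $T_n$ is continuous, Brouwer's fixed--point theorem gives $z_n\in K_n$ with $z_n=P_n(T(z_n))$, so that $\|z_n-T(z_n)\|=\|P_n(T(z_n))-T(z_n)\|\le\tfrac1n$. Because $(z_n)\subseteq K$ with $K$ compact, extract a subsequence $z_{n_k}\to z\in K$. Continuity of $T$ yields $T(z_{n_k})\to T(z)$, and combined with $\|z_{n_k}-T(z_{n_k})\|\le\tfrac{1}{n_k}\to 0$ this forces $z=T(z)$, completing the argument.

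The genuine obstacle is the appeal to Brouwer's fixed--point theorem: that is the irreducibly topological ingredient with no soft proof, and everything else here (the projection, the compactness of the closed convex hull, the diagonal/limit passage) is elementary analysis built on top of it. A secondary point deserving care is the claim that $K=\overline{\mathrm{conv}}(T(S))$ is compact: in a general normed (not necessarily complete) space the closed convex hull of a compact set need not be compact, so one should either invoke that the ambient space is complete when applying the theorem, or note that $T(S)$ relatively compact already makes its closed convex hull totally bounded and, taken inside a complete superspace, compact, after which closedness of $S$ in $E$ keeps $K$ inside $S$.
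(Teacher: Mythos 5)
The paper does not prove this statement: it is quoted as a classical background result (used later only through Theorem \ref{punto-fijo} in the existence proof), so there is no in-paper argument to compare against. Your proof is the standard one --- Schauder projections onto finite-dimensional polytopes, Brouwer's theorem, and a compactness/limit passage --- and the core of it is correct: $P_n$ is well defined and continuous, $\|P_n(x)-x\|\le 1/n$, Brouwer applies to $T_n=P_n\circ T|_{K_n}$ on the compact convex finite-dimensional set $K_n$, and the approximate fixed points converge to a genuine one.

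The one point that needs repair is the reduction to $K=\overline{\mathrm{conv}}\,(T(S))$. You correctly flag that in a merely normed (possibly incomplete) space the closed convex hull of a compact set need not be compact, but the workaround you propose --- taking the closed convex hull inside a complete superspace --- does not quite work: that closure may contain points outside $E$, and $S$ is only closed in $E$, so you cannot conclude $K\subseteq S$. The clean fix is to avoid the convex hull altogether. Since $T(S)$ is relatively compact in $E$, the set $\overline{T(S)}$ is compact; choose the $\tfrac1n$-net $\{x_1,\dots,x_{m_n}\}$ inside $T(S)\subseteq S$, so that $K_n=\mathrm{conv}\{x_1,\dots,x_{m_n}\}\subseteq S$ by convexity of $S$, and define $P_n$ on $\overline{T(S)}$ rather than on $K$. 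Brouwer still gives $z_n\in K_n$ with $\|z_n-T(z_n)\|\le\tfrac1n$; now extract the convergent subsequence from $T(z_{n_k})\in\overline{T(S)}$ (compact) rather than from $z_{n_k}$, note $z_{n_k}$ converges to the same limit $w$, that $w\in S$ because $S$ is closed, and conclude $T(w)=w$ by continuity. With this modification the argument is complete exactly as stated, for an arbitrary normed space; as you say, the only irreducible input is Brouwer's theorem.
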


\section{Construction of an appropriate phase space}

We need a suitable vector space $\mathcal{B} \subset G((-\infty,0],\mathbb{R}^n)$ equipped with a norm $\left\|\cdot\right\|_{\mathcal{B}}$ which satisfies the following axioms:

\begin{enumerate}

	\item[\namedlabel{a1}{(A1)}] $\mathcal{B}$ is complete.
	
	\item[\namedlabel{A2}{(A2)}] If $t_0 \in \mathbb{R}$, $\sigma >0$, $y\colon (-\infty,t_0+\sigma ]\to \mathbb{R}^n$ is regulated on $[t_0,t_0+\sigma]$ and $y_{t_0} \in \mathcal{B}$, then there are locally bounded functions $k_1,k_2,k_3\colon [0,\infty) \to (0,\infty)$, all independent of $y$, $t_0$ and $\sigma$, such that, 
	for every $t \in [t_0,t_0+\sigma]$:
	\begin{enumerate}[label=(\alph*)]
		
		\item $y_t \in \mathcal{B}$.
		
		\item $\left\|y(t)\right\| \leq k_1(t-t_0)\left\|y_t\right\|_{\mathcal{B}}$.
		
		\item $\left\|y_t\right\|_{\mathcal{B}} \leq k_2(t-t_0)\|y_{t_{0}}\|_{\mathcal{B}} + k_3(t-t_0)\displaystyle\sup_{u \in [t_0,t]}\left\|y(u)\right\|$.
		
	\end{enumerate}	
	
	\item[\namedlabel{A3}{(A3)}] For $t \geq 0$, let $S(t)\colon  \mathcal{B} \to \mathcal{B}$ be the operator defined by 
	$$(S(t)\varphi)(\theta) = \begin{cases} \varphi(0), & \theta = 0, \\
		\varphi(0^-), & -t \leq \theta <0, \\
		\varphi(t+\theta), & \theta <-t.  
	\end{cases} $$ Then, there is a continuous function $k\colon [0,\infty) \to [0,\infty)$ such that $k(0) = 0$ and $$ \left\|S(t)\varphi\right\|_{\mathcal{B}} \leq (1+k(t))\left\|\varphi \right\|_{\mathcal{B}}, \quad \text{for all } \varphi \in \mathcal{B}.$$
	
\end{enumerate} 
 
 The example below illustrates the existence of at least one set with all properties listed above.
 
 \begin{example}[\protect{\cite[Example 3.2]{15}}]
 	\label{20}
 	Let $\rho\colon  (-\infty,0] \to (0,\infty)$ be a continuous function such that $\rho(0) = 1$ and that the function $p \colon  [0, \infty) \to (0, \infty)$ given by $$p(t) = \sup_{\theta \leq -t} \frac{\rho(t+\theta)}{\rho(\theta)}, \quad t \geq 0,$$
 is locally bounded. The space $$\mathcal{B} = BG_{\rho} ((-\infty,0],\mathbb{R}^n) = \left\{\varphi \in G((-\infty,0],\mathbb{R}^n) \colon  \frac{\|\varphi(\theta)\|}{\rho(\theta)} \text{ is bounded} \right\}$$ endowed with the norm $$\|\varphi\|_{\rho} = \sup_{\theta \leq 0}\frac{\|\varphi (\theta)\|}{\rho (\theta)}, \quad \varphi \in BG_{\rho} ((-\infty,0],\mathbb{R}^n),$$ satisfies all the properties \ref{a1}--\ref{A3}. Therefore, $BG_{\rho} ((-\infty,0],\mathbb{R}^n)$ is a phase space.
\end{example}
 
In order to use some results of the Perron--Stieltjes integral, we need two lemmas:

\begin{lemma}[\protect{\cite[Lemma 3.8]{15}}]
	\label{e10}
	Assume that $\mathcal{B}$ is a phase space. If $y\colon  (-\infty, t_0 + \sigma] \to \mathbb{R}^n$ is such that $y_{t_0} \in \mathcal{B}$ and $y|_{[t_0,t_0+\sigma]}$ is a regulated function, then $t\mapsto\|y_t\|_{\mathcal{B}} $ is regulated on $[t_0,t_0 + \sigma]$.
\end{lemma}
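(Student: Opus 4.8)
The plan is to show that, for each $t^\ast \in (t_0,t_0+\sigma]$, the one–sided limit $\lim_{t\to t^{\ast-}}\|y_t\|_{\mathcal{B}}$ exists (and symmetrically that $\lim_{t\to t^{\ast+}}\|y_t\|_{\mathcal{B}}$ exists for $t^\ast\in[t_0,t_0+\sigma)$), by deriving a one–step Gronwall–type inequality that compares $\|y_t\|_{\mathcal{B}}$ with $\|y_s\|_{\mathcal{B}}$ for $s<t$ close to one another. As a preliminary remark, $y$ is regulated on the compact interval $[t_0,t_0+\sigma]$, hence bounded there, and then axiom \ref{A2}(c) together with the local boundedness of $k_2,k_3$ yields $C:=\sup_{t\in[t_0,t_0+\sigma]}\|y_t\|_{\mathcal{B}}<\infty$; in particular all the limits inferior and superior appearing below are finite.

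The core of the argument is an ``alignment'' decomposition. Note that $\|y_t-y_s\|_{\mathcal{B}}$ need \emph{not} be small even for small $t-s$, since for very negative $\theta$ the values $y(t+\theta)$ and $y(s+\theta)$ are unrelated; this is precisely where the shift operator of axiom \ref{A3} enters. Given $t_0\le s<t\le t_0+\sigma$, write $y_t=S(t-s)y_s+z$ with $z:=y_t-S(t-s)y_s\in\mathcal{B}$. Using the three–case definition of $S(t-s)$ one checks that $z(\theta)=0$ for $\theta<-(t-s)$, while for $\theta\in[-(t-s),0]$ the value $z(\theta)$ is a difference of two values (or one–sided limits) of $y$ at points of $[s,t]$; explicitly $z(\theta)=y(t+\theta)-y(s^-)$ for $-(t-s)\le\theta<0$ and $z(0)=y(t)-y(s)$. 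To estimate $\|z\|_{\mathcal{B}}$, set $w\colon(-\infty,t]\to\mathbb{R}^n$, $w(\tau):=z(\tau-t)$; then $w\equiv 0$ on $(-\infty,s)$, $w$ is regulated on $[s-1,t]$, $w_{s-1}=0\in\mathcal{B}$ and $w_t=z$. Applying \ref{A2}(c) on $[s-1,t]$ (whose length $t-s+1$ lies in the fixed compact set $[1,\sigma+1]$) gives $\|z\|_{\mathcal{B}}\le M_3\sup_{u\in[s,t]}\|w(u)\|$ with $M_3:=\sup_{\tau\in[0,\sigma+1]}k_3(\tau)<\infty$. Combining with \ref{A3} yields
\[
\|y_t\|_{\mathcal{B}}\;\le\;\bigl(1+k(t-s)\bigr)\,\|y_s\|_{\mathcal{B}}+M_3\sup_{u\in[s,t]}\|w(u)\|.
\]

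Now fix $t^\ast\in(t_0,t_0+\sigma]$ and $\varepsilon>0$. Since $y$ is regulated at $t^\ast$, there is $\delta>0$ (with $\delta<t^\ast-t_0$) such that any two values or one–sided limits of $y$ at points of $(t^\ast-\delta,t^\ast)$ differ by at most $\varepsilon$; shrinking $\delta$, also $k(\tau)<\varepsilon$ for $\tau\in[0,\delta]$, because $k$ is continuous and $k(0)=0$. Then for $t_0<s<t<t^\ast$ with $s,t\in(t^\ast-\delta,t^\ast)$ we get $\sup_{u\in[s,t]}\|w(u)\|\le\varepsilon$, hence $\|y_t\|_{\mathcal{B}}\le(1+\varepsilon)\|y_s\|_{\mathcal{B}}+M_3\varepsilon$. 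Writing $L:=\limsup_{t\to t^{\ast-}}\|y_t\|_{\mathcal{B}}$ and $\ell:=\liminf_{s\to t^{\ast-}}\|y_s\|_{\mathcal{B}}$ (finite by the first paragraph), this inequality gives $L\le(1+\varepsilon)\ell+M_3\varepsilon$ for every $\varepsilon>0$, so $L\le\ell$ and therefore $L=\ell$, i.e. $\lim_{t\to t^{\ast-}}\|y_t\|_{\mathcal{B}}$ exists. An entirely analogous argument, now with $s,t$ in a right neighbourhood $(t^\ast,t^\ast+\delta)$ and using that $y$ admits a right limit at $t^\ast$, shows that $\lim_{t\to t^{\ast+}}\|y_t\|_{\mathcal{B}}$ exists for every $t^\ast\in[t_0,t_0+\sigma)$. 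Hence $t\mapsto\|y_t\|_{\mathcal{B}}$ is regulated on $[t_0,t_0+\sigma]$.

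The main obstacle, which the decomposition above is designed to circumvent, is exactly the mismatch of the ``tails'' of $y_t$ and $y_s$: neither axiom \ref{A2} nor a naive triangle inequality directly controls $\|y_t\|_{\mathcal{B}}-\|y_s\|_{\mathcal{B}}$. The remedy is to absorb the mismatch into $S(t-s)y_s$ — paying only the factor $1+k(t-s)$ from \ref{A3} — so that the remainder $z$ is supported in $[-(t-s),0]$ and its $\mathcal{B}$–norm becomes estimable by a sup–norm quantity through \ref{A2}(c), a quantity that is small in the limit by regulatedness of $y$. Care is needed in choosing the auxiliary function $w$ and its base interval so that the hypothesis $w_{s-1}\in\mathcal{B}$ holds for free (it equals $0$) and so that the coefficient $k_3(t-s+1)$ stays uniformly bounded.
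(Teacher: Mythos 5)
Your argument is correct. Note that the paper itself gives no proof of this lemma: it is imported verbatim from reference \cite{15} (Lemma 3.8 there), so there is no in-paper proof to compare against. Your proof is a valid self-contained argument and follows the standard route for such phase-space lemmas: decompose $y_t=S(t-s)y_s+z$ so that the tail mismatch is absorbed by the shift operator of \ref{A3} at the cost of the factor $1+k(t-s)$, and control the compactly supported remainder $z$ through \ref{A2}(c) applied to an auxiliary function that vanishes on a left half-line; the Cauchy-type limsup/liminf comparison then yields the existence of both one-sided limits. All the delicate points (membership $z\in\mathcal{B}$ via \ref{A2}(a), uniform boundedness of $k_3$ on $[0,\sigma+1]$, finiteness of $\sup_t\|y_t\|_{\mathcal{B}}$, and the treatment of the one-sided limit $y(s^-)$ inside the small interval) are handled correctly.
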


\begin{lemma}[\protect{\cite[Lemma 3.10]{15}}]\label{80}
	Let $r\colon [t_0,t_0+\sigma] \to \mathbb{R}$ be a nondecreasing function such that $r(s) \leq s$ for all $s\in [t_0,t_0+\sigma]$. Assume that $y\colon  (-\infty, r(t_0 + \sigma)] \to \mathbb{R}^n$ is such that $y_{r(t_0)} \in \mathcal{B}$ and $y|_{[r(t_0),r(t_0+\sigma)]}$ is a regulated function, then $t\mapsto\|y_{r(t)}\|_{\mathcal{B}} $ is regulated on $[t_0,t_0 + \sigma]$.
\end{lemma}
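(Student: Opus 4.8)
The plan is to write the function $t \mapsto \|y_{r(t)}\|_{\mathcal{B}}$ as the composition $G \circ r$, where $G(s) := \|y_s\|_{\mathcal{B}}$, and to exploit the elementary fact that, although a composition of two regulated functions need not be regulated, a regulated function composed on the outside with a \emph{nondecreasing} function on the inside always is.

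First I would dispose of the degenerate case: if $r(t_0) = r(t_0+\sigma)$, then $r$ is constant on $[t_0,t_0+\sigma]$ and $t \mapsto \|y_{r(t)}\|_{\mathcal{B}}$ is constant, hence regulated. So assume $r(t_0) < r(t_0+\sigma)$. Since $y_{r(t_0)} \in \mathcal{B}$ and $y|_{[r(t_0),r(t_0+\sigma)]}$ is regulated, Lemma \ref{e10}, applied on the interval $[r(t_0),r(t_0+\sigma)]$ (with $r(t_0)$ in the role of $t_0$ and $r(t_0+\sigma)-r(t_0)$ in the role of $\sigma$), shows that $G\colon s \mapsto \|y_s\|_{\mathcal{B}}$ is regulated on $[r(t_0),r(t_0+\sigma)]$. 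Next, being nondecreasing, $r$ is itself regulated on $[t_0,t_0+\sigma]$: for each $t^* \in (t_0,t_0+\sigma]$ the left limit $L := r(t^{*-}) = \sup_{t_0 \le t < t^*} r(t)$ exists with $r(t_0) \le L \le r(t^*)$, and for each $t^* \in [t_0,t_0+\sigma)$ the right limit $M := r(t^{*+}) = \inf_{t^* < t \le t_0+\sigma} r(t)$ exists with $r(t^*) \le M \le r(t_0+\sigma)$.

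Then, for fixed $t^* \in (t_0,t_0+\sigma]$, I would show $\lim_{t \to t^{*-}} G(r(t))$ exists by a case split. If $r(t_1) = L$ for some $t_1 < t^*$, then monotonicity forces $r \equiv L$ on $[t_1,t^*)$, so the left limit of $G \circ r$ at $t^*$ equals $G(L)$. Otherwise $r(t) < L$ for every $t < t^*$; then $r(t) \to L$ strictly from below, $L \in (r(t_0),r(t_0+\sigma)]$ is a left accumulation point of the domain of $G$, and regularity of $G$ gives that the left limit of $G \circ r$ at $t^*$ equals $G(L^-)$. A symmetric argument handles the right limit at any $t^* \in [t_0,t_0+\sigma)$, yielding either $G(M)$ or $G(M^+)$. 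Hence $t \mapsto \|y_{r(t)}\|_{\mathcal{B}}$ has one-sided limits everywhere on $[t_0,t_0+\sigma]$, i.e.\ it is regulated.

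The step I expect to be the main obstacle is the bookkeeping inside the case split: one must check that the limiting value $L$ (resp.\ $M$) genuinely lies in $[r(t_0),r(t_0+\sigma)]$, where $G$ has been shown regulated, and that in the non-constant case $r$ approaches it \emph{strictly} from the correct side, so that it is $G(L^-)$ (resp.\ $G(M^+)$) and not $G(L)$ (resp.\ $G(M)$) that appears. Both facts follow quickly from monotonicity of $r$ (for instance $L = r(t_0)$ would force $r(t) < r(t_0)$ for $t$ near $t^*$, contradicting $r(t) \ge r(t_0)$), but this is precisely where a careless argument would break, since regularity of $G$ tells us nothing about the behaviour of $G$ along sequences oscillating around a point. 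Note that the hypothesis $r(s) \le s$ plays no role in this argument; it only keeps the construction consistent with the domains appearing elsewhere in the paper.
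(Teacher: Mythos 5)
The paper states this lemma without proof, importing it verbatim from \cite[Lemma 3.10]{15}, so there is no in-paper argument to compare against. Your proof is correct and self-contained: reducing to $G\circ r$ with $G(s)=\|y_s\|_{\mathcal B}$ regulated on $[r(t_0),r(t_0+\sigma)]$ by Lemma \ref{e10}, and then proving that a regulated function precomposed with a nondecreasing map is regulated, is exactly the right mechanism, and your case split (either $r$ is eventually constant equal to $L$ to the left of $t^*$, in which case the one-sided limit is $G(L)$, or $r(t)\uparrow L$ strictly, in which case it is $G(L^-)$ with $L\in(r(t_0),r(t_0+\sigma)]$ so that $G(L^-)$ is guaranteed to exist) handles the only delicate point. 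Your closing observation is also accurate: $r(s)\le s$ is not needed for regularity itself, only for compatibility with the domains in which the lemma is applied elsewhere (in condition \ref{(H5)} it is $\rho(t,x_t)\le t$ that keeps $x_{\rho(t,x_t)}$ inside the domain where $x$ is prescribed).
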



\section{Existence and uniqueness of solutions}

In this section, our goal is to prove results concerning existence and uniqueness of solutions of measure functional differential equations with state--dependent delays given by
\begin{equation}
	\begin{array}{rcl}
		\label{eq3.1}
		x(t) &=& x(t_0)+ \displaystyle\int_{t_0}^{t} f(s,x_{\rho(s,x_s)}){\rm d}g(s), \quad t \in [t_0,t_0+\sigma], \\ x_{t_0}&=&\phi,
	\end{array}
\end{equation}
where $\sigma>0$, $t_0 \in \mathbb{R}$, $g\colon [t_0,t_0+\sigma] \to \mathbb{R}$ is a  nondecreasing function, $\mathcal{B} \subset G((-\infty,0],\mathbb{R}^n)$ is a Banach space satisfying axioms \ref{a1}--\ref{A3}, $\phi \in \mathcal{B}$ and $x\colon (-\infty,t_0+\sigma] \to \mathbb{R}^n$, $\rho\colon [t_0,t_0+\sigma]\times \mathcal{B} \to \mathbb{R}$ and 
$f\colon  [t_0,t_0+\sigma]\times \mathcal{B} \to \mathbb{R}^n$ are functions. 

To show that the problem $(\ref{eq3.1})$ has a solution, we begin by considering the set 
\begin{equation}
	\label{e11}
	X = \left\{x\colon (-\infty, t_0+\sigma] \to \mathbb{R}^n\colon  x_{t_0} \in \mathcal{B} \text{ and } x|_{[t_0,t_0+\sigma]} \text{ is regulated}\right\},
\end{equation} 
equipped with the norm
\begin{equation}
	\label{e12}
	\|x\|_{X} = \|x_{t_0}\|_{\mathcal{B}}+ \sup_{u\in [t_0,t_0+\sigma]} \|x(u)\|. 
\end{equation} 
This set is a Banach space. We also assume the following assumptions:
\begin{enumerate}
	
	\item[\namedlabel{(H1)}{(B1)}] For all $x \in \mathcal{B}$, the integral $\int_{t_0}^{t_0+\sigma} f(s,x) {\rm d}g(s)$ exists in the sense of Perron--Stieltjes.
	
	\item[\namedlabel{(H2)}{(B2)}]  There exists a Perron--Stieltjes integrable function $M\colon [t_0,t_0+\sigma] \to \mathbb{R}^+$ such that $$\left\|\int_{u_1}^{u_2} f(s,x) {\rm d}g(s)\right\| \leq \int_{u_1}^{u_2} M(s) {\rm d}g(s)$$ whenever $x \in \mathcal{B}$ and $u_1, u_2 \in [t_0,t_0+\sigma]$.
	
	\item[\namedlabel{(H3)}{(B3)}] There exists a regulated function $L\colon [t_0,t_0+\sigma] \to \mathbb{R}^+$ such that $$\left\|\int_{u_1}^{u_2} \left(f(s,x)-f(s,y)\right) {\rm d}g(s)\right\| \leq \int_{u_1}^{u_2} L(s) \left\|x-y\right\|_{\mathcal{B}}{\rm d}g(s)$$ whenever $x,y \in \mathcal{B}$  
	and $u_1, u_2 \in [t_0,t_0+\sigma]$ . 
	
	\item[\namedlabel{(H4)}{(B4)}]  There exists a regulated function $L_2\colon [t_0,t_0+\sigma] \to \mathbb{R}^+$ such that $$\left\|\int_{u_1}^{u_2} \left(f(s,x_{u})-f(s,x_{v})\right) {\rm d}g(s)\right\| \leq \int_{u_1}^{u_2} L_2(s) \left|u-v\right| {\rm d}g(s)$$ for all $x \in X$  
	and $u_1, u_2,u, v \in [t_0,t_0+\sigma]$.
	
	\item[\namedlabel{(H5)}{(B5)}] For all $x \in X$, 
	the function $t \mapsto \rho(t, x_t)$, $t \in [t_0,t_0+\sigma]$, is nondecreasing, satisfies $\rho(t,x_t) \leq t$ and $x_{\rho(t_0,x_{t_0})} \in \mathcal{B}$.
	
	\item[\namedlabel{(H6)}{(B6)}] There exists a regulated function $L_3\colon [t_0,t_0+\sigma] \to \mathbb{R}^+$ such that $$\int_{u_1}^{u_2} \left|\rho(s,x) - \rho(s,y)\right| {\rm d}g(s) \leq \int_{u_1}^{u_2} L_3(s) \left\|x-y\right\|_{\mathcal{B}} {\rm d}g(s)$$ 
	for all $u_1,u_2 \in [t_0,t_0+\sigma]$ and all $x,y \in \mathcal{B}$.

\end{enumerate}
\begin{remark}
	\label{81} By the properties of the integral and by Lemma \ref{e10} guarantee that, whenever $x\colon (-\infty,t_0+\sigma] \to \mathbb{R}^n$ is such that $x|_{[t_0,t_0+\sigma]}$ is regulated and $x_{t_0} \in \mathcal{B}$, the function $t \mapsto \| x_{t}\|_{\mathcal{B}}$ is Perron--Stieltjes integrable with respect to a nondecreasing function $g$.
\end{remark}
\begin{remark}
	Notice that condition \ref{(H5)} is necessary in order to ensure that $\|x_{\rho(t,x_t)}\|_{\mathcal{B}}$ is a regulated function (Lemma \ref{80}). Thus, in this case, following the same arguments used in the Remark \ref{81}, $\|x_{\rho(t,x_t)}\|_{\mathcal{B}}$ is Perron--Stieltjes integrable with respect to a nondecreasing function $g$, whenever $x\colon (-\infty,t_0+\sigma] \to \mathbb{R}^n$ is such that $x|_{[t_0,t_0+\sigma]}$ is regulated and $x_{t_0} \in \mathcal{B}$.
\end{remark}


It follows a result concerning the existence of solutions of measure FDEs with state--dependent delays.

\begin{theorem}[Existence of Solutions]
	\label{48}	
	Let $\mathcal{B} \subset G((-\infty,0],\mathbb{R}^n)$ be a Banach space satisfying axioms \ref{a1}--\ref{A3}, $\phi \in \mathcal{B}$ and $g\colon [t_0,t_0+\sigma] \to \mathbb{R}$ be a nondecreasing function. If $f\colon [t_0,t_0+\sigma] \times \mathcal{B} \to \mathbb{R}^n$ and $\rho\colon [t_0,t_0+\sigma] \times \mathcal{B}\to \mathbb{R}$ are functions that satisfy the properties \ref{(H1)}--\ref{(H6)}, then the problem \eqref{eq3.1}
	has a solution.
\end{theorem}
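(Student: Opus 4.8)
The plan is to realize a solution of \eqref{eq3.1} as a fixed point of an integral operator and to invoke the Schauder Fixed--Point Theorem \ref{punto-fijo}. First I would fix, once and for all, the prolongation convention: given $\phi\in\mathcal{B}$, set $\widehat{\phi}\colon(-\infty,t_0+\sigma]\to\mathbb{R}^n$ by $\widehat{\phi}(t)=\phi(t-t_0)$ for $t\le t_0$ and $\widehat{\phi}(t)=\phi(0)$ for $t\in[t_0,t_0+\sigma]$. Using the Perron--Stieltjes bound in \ref{(H2)} and the regularity of $g$, I expect the natural domain for the operator to be a set of the form
\[
\Omega=\Big\{x\in X:\ x_{t_0}=\phi,\ \ \|x(\tau_2)-x(\tau_1)\|\le \textstyle\int_{\tau_1}^{\tau_2}M(s)\,{\rm d}g(s)\ \text{ for } t_0\le\tau_1\le\tau_2\le t_0+\sigma\Big\},
\]
which is nonempty (it contains $\widehat{\phi}$ provided $\int M\,{\rm d}g\ge 0$, trivially true), convex and closed in $X$. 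On $\Omega$ define $T$ by $(Tx)_{t_0}=\phi$ and
\[
(Tx)(t)=\phi(0)+\int_{t_0}^{t} f\big(s,x_{\rho(s,x_s)}\big)\,{\rm d}g(s),\qquad t\in[t_0,t_0+\sigma].
\]

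The steps, in order. (1) \emph{$T$ is well defined.} For $x\in\Omega\subset X$, Lemma \ref{e10} and \ref{(H5)}/Lemma \ref{80} give that $s\mapsto x_{\rho(s,x_s)}$ takes values in $\mathcal{B}$ with $\|x_{\rho(s,x_s)}\|_{\mathcal{B}}$ regulated, hence the integrand is of the type controlled by \ref{(H1)}--\ref{(H2)}; a standard argument (approximating the $\mathcal{B}$-valued step function $s\mapsto x_{\rho(s,x_s)}$ and using \ref{(H3)}) shows the Perron--Stieltjes integral $\int_{t_0}^{t} f(s,x_{\rho(s,x_s)})\,{\rm d}g(s)$ exists. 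Moreover \ref{(H2)} yields $\|(Tx)(\tau_2)-(Tx)(\tau_1)\|\le\int_{\tau_1}^{\tau_2}M\,{\rm d}g$, so $Tx\in\Omega$; in particular $Tx$ is regulated on $[t_0,t_0+\sigma]$ and $(Tx)_{t_0}=\phi\in\mathcal{B}$, i.e.\ $T(\Omega)\subseteq\Omega$. (2) \emph{$T(\Omega)$ is relatively compact.} Since the $x_{t_0}$-part is frozen at $\phi$, relative compactness in $X$ reduces to relative compactness in $G([t_0,t_0+\sigma],\mathbb{R}^n)$ of $\{x|_{[t_0,t_0+\sigma]}:x\in T(\Omega)\}$; by Theorem \ref{relativamente-compacto} it suffices to exhibit an increasing $K$ and increasing continuous $\eta$ with $\eta(0)=0$ such that $\|(Tx)(\tau_2)-(Tx)(\tau_1)\|\le\eta(K(\tau_2)-K(\tau_1))$. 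Take $K(t)=\int_{t_0}^{t}M(s)\,{\rm d}g(s)$ (nondecreasing since $M\ge0$, $g$ nondecreasing) and $\eta(u)=u$: the bound from step (1) is exactly this. Boundedness of the initial values is immediate since all elements of $T(\Omega)$ start at $\phi(0)$. (3) \emph{$T$ is continuous on $\Omega$.} For $x,\bar x\in\Omega$,
\[
\|(Tx)(t)-(T\bar x)(t)\|\le\Big\|\int_{t_0}^{t}\big(f(s,x_{\rho(s,x_s)})-f(s,\bar x_{\rho(s,\bar x_s)})\big)\,{\rm d}g(s)\Big\|,
\]
and I split the integrand by inserting $f(s,x_{\rho(s,\bar x_s)})$: the first difference is controlled by \ref{(H4)} times $|\rho(s,x_s)-\rho(s,\bar x_s)|$, which by \ref{(H6)} is $\le L_3(s)\|x_s-\bar x_s\|_{\mathcal{B}}$; the second by \ref{(H3)} times $\|x_{\rho(s,\bar x_s)}-\bar x_{\rho(s,\bar x_s)}\|_{\mathcal{B}}$. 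Using axiom \ref{A2}(c) to bound $\|x_u-\bar x_u\|_{\mathcal{B}}\le k_3(\sigma)\sup_{[t_0,t_0+\sigma]}\|x-\bar x\|$ (the $\phi$-terms cancel), one gets $\|Tx-T\bar x\|_\infty\le C\,\|x-\bar x\|_X$ with $C$ depending on $\sup L$, $\sup L_2$, $\sup L_3$, $k_3(\sigma)$ and $g(t_0+\sigma)-g(t_0)$; hence $T$ is (Lipschitz) continuous. With (1)--(3), Theorem \ref{punto-fijo} gives a fixed point $x^\ast\in\Omega$, and $(Tx^\ast)_{t_0}=\phi$, $(Tx^\ast)(t)=x^\ast(t)$ is precisely a solution of \eqref{eq3.1}.

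The main obstacle I anticipate is step (1), specifically making rigorous that $\int_{t_0}^{t} f(s,x_{\rho(s,x_s)})\,{\rm d}g(s)$ exists as a Perron--Stieltjes integral. The hypotheses \ref{(H1)}--\ref{(H2)} give existence and a uniform bound only when the \emph{second argument of $f$ is a fixed element of $\mathcal{B}$}, whereas here $s\mapsto x_{\rho(s,x_s)}$ is a genuinely $\mathcal{B}$-valued, merely regulated (by Lemmas \ref{e10}, \ref{80}) function of $s$; the composition subtlety flagged in the introduction is exactly this point. The remedy is to approximate $s\mapsto x_{\rho(s,x_s)}$ uniformly in the $\mathcal{B}$-norm by step functions $\varphi_n(s)=\sum_j \chi_{[s_{j-1},s_j)}(s)\,x_{\rho(\xi_j,x_{\xi_j})}$ (possible because this map is regulated), observe $\int f(s,\varphi_n(s))\,{\rm d}g(s)$ exists by finite additivity from \ref{(H1)}, and use \ref{(H3)} together with a Cauchy criterion for the Perron--Stieltjes integral to pass to the limit; \ref{(H2)} then transfers the bound to the limit. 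A secondary technical point is checking $x_{\rho(t,x_t)}\in\mathcal{B}$ for $t>t_0$ (not just at $t_0$, which is \ref{(H5)}): this follows from axiom \ref{A2}(a) applied to the function $y=x\circ\rho(\cdot,x_\cdot)$-prolongation once one knows it is regulated on the relevant interval, which is the content of Lemma \ref{80}. Everything else is routine bookkeeping with the axioms \ref{A2}(b)--(c) and the nondecreasing gauge function $K$.
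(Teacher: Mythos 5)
Your proposal is correct and follows essentially the same route as the paper's proof: the same integral operator on a closed, convex, bounded, invariant subset of $X$, relative compactness of the restrictions via Theorem~\ref{relativamente-compacto}, continuity via the splitting controlled by \ref{(H3)}, \ref{(H4)}, \ref{(H6)} and axiom~\ref{A2}(c), and then Schauder's theorem. The only cosmetic differences are that the paper takes $K(t)=\int_{t_0}^{t}M(s)\,\mathrm{d}g(s)+t$ so that $K$ is genuinely increasing (your $K$ is only nondecreasing, a trivial fix), and it inserts $f(s,y_{\rho(s,x_s)})$ rather than $f(s,x_{\rho(s,\bar x_s)})$ in the continuity estimate; neither affects the argument.
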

\begin{proof}
	Let
	\begin{equation}
		\label{A}
		A = \left\{x \in X\colon x_{t_0} = \phi \text{ and } \left\|x(t)-\phi(0)\right\|\leq \int_{t_0}^{t} M(s) {\rm d}g(s) \text{ for all } t \in [t_0,t_0+\sigma]\right\}
	\end{equation} and define the operator $\Gamma\colon  A\to X $ by $$\Gamma x(t) \coloneqq \begin{cases} \phi(t-t_0), & \text{if } t \leq t_0 , \\ x(t_0) + \displaystyle\int_{t_0}^{t} f(s,x_{\rho(s,x_s)}){\rm d}g(s), & \text{if } t_0 \leq t \leq t_0+\sigma. \end{cases} $$
	
	
	\noindent \textbf{Statement 1:}\label{A1} The set $A$ is convex.
	In fact, given $x,y \in A$, $\theta \in (-\infty,0]$ and $\xi \in (0,1)$, we have \begin{align*}
		\left(\xi x+(1-\xi)y\right)_{t_0}(\theta) & = \xi x(t_0 + \theta) + (1-\xi)y(t_0 + \theta) = \phi(\theta). 
	\end{align*}
	For all $t \in [t_0,t_0+\sigma]$, we get
	\begin{align*}
		\left\|\left(\xi x+(1-\xi)y\right)(t) -\phi(0)\right\| & \leq \xi\left\|x(t)-\phi(0)\right\|+(1-\xi)\left\|y(t)-\phi(0)\right\| \leq \int_{t_0}^{t} M(s) {\rm d}g(s),
	\end{align*}
	proving the Statement 1.
	
	\noindent \textbf{Statement 2:} $\Gamma(A) \subset A$.
	Indeed, for $x \in A$, we have $\left(\Gamma x\right)_{t_0}(\theta) = \left(\Gamma x\right)(t_0+\theta)  = \phi(\theta).$ 
	By \ref{(H2)}, we get
	\begin{align*}
		\left\|\Gamma x(t) - \phi(0)\right\| 
		= \left\|\int_{t_0}^{t}f(s,x_{\rho(s,x_s)}){\rm d}g(s)\right\| 
		\leq \int_{t_0}^{t} M(s) {\rm d}g(s),
	\end{align*}
	for all $t \in [t_0,t_0+\sigma]$, proving the Statement 2.
	
	\noindent	\textbf{Statement 3:} The set $A$ is bounded and closed. Indeed, let $(x_n)_{n\in \mathbb{N}}$ be a sequence in $A$ such that converges to $x$ on $\left\|\cdot\right\|_{X}$ norm. Then, for all $n \in \mathbb{N}$, $(x_n)_{t_0} = \phi$, $$\left\|x_n(t) - \phi(0)\right\| \leq \int_{t_0}^{t} M(s) {\rm d}g(s) \quad \text{ for all } t \in [t_0,t_0+\sigma]$$ 
	and	\begin{align}
		\left\|x_{t_0}-\phi\right\|_{\mathcal{B}} & \leq \left\|(x-x_n)_{t_0}\right\|_{\mathcal{B}} + \sup_{u \in [t_0,t_0+\sigma]} \left\|x(u)-x_n(u)\right\| \label{c} = \left\|x-x_n\right\|_{X}.
	\end{align}
	Thus, passing (\ref{c}) to limit when $n \to \infty$, we obtain $x_{t_0} = \phi$. By \ref{A2}, we have
	\begin{align}
		\label{C}
		\left\|x(t)-\phi(0)\right\| & \leq \left\|x(t)-x_n(t)\right\|+\left\|x_n(t)-\phi(0)\right\| \nonumber \\[4.5pt]
		& \leq k_1(t-t_0)\left\|(x-x_n)_t\right\|_{\mathcal{B}}+ \int_{t_0}^{t} M(s) {\rm d}g(s) \nonumber \\
		& \leq \sup_{u \in [0,\sigma]}k_1(u) \sup_{u \in [0,\sigma]}k_3(u) \left\|x-x_n\right\|_{X}+\int_{t_0}^{t} M(s) {\rm d}g(s)
	\end{align}
	for all $t \in [t_0,t_0+\sigma]$ and for all $n \in \mathbb{N}$. If $\sup_{u \in [0,\sigma]}k_1(u) \sup_{u \in [0,\sigma]}k_3(u) > 0$, then let $\varepsilon >0$ be arbitrary and $n_0 \in \mathbb{N}$ be such that $\left\|x-x_n\right\|_{X} < \varepsilon \left(\sup_{u \in [0,\sigma]}k_1(u) \sup_{u \in [0,\sigma]}k_3(u)\right)^{-1} $ for all $n\geq n_0$. By $(\ref{C})$, we have $$\left\|x(t)-\phi(0)\right\| < \varepsilon+\int_{t_0}^{t} M(s) {\rm d}g(s) \quad \text{for all } t\in [t_0,t_0+\sigma].$$	Since $\varepsilon$ is arbitrary, we conclude that \begin{equation}\label{D}\left\|x(t)-\phi(0)\right\|\leq \int_{t_0}^{t} M(s) {\rm d}g(s) \quad \text{for all } t\in[t_0,t_0+\sigma].\end{equation}
	Clearly, \eqref{D} is true if $\sup_{u \in [0,\sigma]}k_1(u) \sup_{u \in [0,\sigma]}k_3(u) = 0$. Thus, we obtain that $A$ is closed. Finally, by $(\ref{D})$, 
	\begin{align*}
		\left\|x\right\|_{X} & \leq \left\| \phi \right\|_{\mathcal{B}} + \sup_{u \in [t_0,t_0+\sigma]} \left( \left\|x(u) - \phi(0)\right\| +\|\phi(0)\| \right) \\
		& \leq \left\|\phi\right\|_{\mathcal{B}} +\int_{t_0}^{t_0+\sigma} M(s) {\rm d}g(s) + \|\phi(0)\|. 
	\end{align*}
	Therefore, $A$ is bounded and the statement is proved.	
	
	
	\noindent \textbf{Statement 4:} The operator $\Gamma$ is continuous. Firstly, given $x,y \in A$ and $t \in [t_0,t_0+\sigma]$, inequalities \ref{(H3)} and \ref{(H4)} imply that
	\begin{align}
		&\left\|\left(\Gamma x-\Gamma y\right)(t)\right\| = \left\|\int_{t_0}^{t} f(s,x_{\rho(s,x_s)})-f(s,y_{\rho(s,y_s)}) {\rm d}g(s)\right\| \nonumber\\
		& \leq \left\|\int_{t_0}^{t} f(s,x_{\rho(s,x_s)})-f(s,y_{\rho(s,x_s)}) {\rm d}g(s)\right\| +\left\|\int_{t_0}^{t} f(s,y_{\rho(s,x_s)})-f(s,y_{\rho(s,y_s)}) {\rm d}g(s)\right\| \nonumber\\
		& \leq \int_{t_0}^{t} L(s) \left\| x_{\rho(s,x_s)}-y_{\rho(s,x_s)}\right\|_{\mathcal{B}}{\rm d}g(s) +\int_{t_0}^{t} L_2(s)\left|\rho(s,x_s)-\rho(s,y_s)\right| {\rm d}g(s). \label{98}
	\end{align} 
	By axiom \ref{A2} and by inequalities \ref{(H6)} and \eqref{98}, we have
	\begin{align}
		& \left\|\left(\Gamma x-\Gamma y\right)(t)\right\| \nonumber\\
		&\leq  \int_{t_0}^{t} L(s) k_3(\rho(s,x_s)-t_0)\sup_{u\in [t_0,\rho(s,x_s)]}\hspace{-0.5cm}\left\| (x-y)(u)\right\| {\rm d}g(s) +\int_{t_0}^{t}L_2(s)L_3(s)\left\|x_s-y_s\right\|_{\mathcal{B}} {\rm d}g(s) \nonumber\\
		& \leq  \int_{t_0}^{t} \hspace{-0.2cm} L(s) k_3(\rho(s,x_s)-t_0)\left\| x-y\right\|_X{\rm d}g(s) \hspace{-0.05cm}+\hspace{-0.1cm}\int_{t_0}^{t}\hspace{-0.2cm} L_2(s)L_3(s)k_3(s-t_0) \hspace{-0.1cm} \sup_{u \in [t_0,s]} \hspace{-0.1cm} \left\| (x-y)(u)\right\| {\rm d}g(s) \nonumber \\
		& \label{f}\leq \int_{t_0}^{t}  \left( L(s) +L_2(s)L_3(s)\right) \sup_{u \in [0,\sigma]}k_3(u) {\rm d}g(s) \left\| x-y\right\|_X. 
	\end{align}
	Therefore, by (\ref{f}),
	\begin{align*}
		\left\|\Gamma x-\Gamma y\right\|_{X} & = \left\|\left(\Gamma x-\Gamma y\right)_{t_0}\right\|_{\mathcal{B}} + \sup_{u \in [t_0,t_0+\sigma]}  \left\|(\Gamma x-\Gamma y)(u)\right\| \\
		& \leq \int_{t_0}^{t_0+\sigma}  \left( L(s) +L_2(s)L_3(s)\right) \sup_{u \in [0,\sigma]}k_3(u) {\rm d}g(s) \left\| x-y\right\|_X,\end{align*}
	proving the continuity on $\left\|\cdot\right\|_{X}$ norm. 
	
	\noindent \textbf{Statement 5:}	The set $B\coloneqq\left\{f\colon [t_0,t_0+\sigma] \to \mathbb{R}^n\colon  f = \Gamma x |_{[t_0,t_0+\sigma]} \text{ for some } x\in A\right\}$ is relatively compact on $G([t_0,t_0+\sigma],\mathbb{R}^n)$. 
	Indeed, for all $t\in [t_0,t_0+\sigma]$,
	\begin{align*}
		\left\|\Gamma x(t)\right\| &\leq \left\|x(t_0)\right\|+\int_{t_0}^{t_0+\sigma} M(s) {\rm d}g(s).
	\end{align*}
	Furthermore,
	\begin{align*}
		\left\|\Gamma x(u) - \Gamma x(v)\right\| & \leq \int_{v}^{u} M(s) {\rm d}g(s).
	\end{align*}
	Since $g$ is nondecreasing, the function $h(t) = \int_{t_0}^{t} M(s) {\rm d}g(s)$ is nondecreasing. In addition, both functions $K\colon [t_0,t_0+\sigma] \to \mathbb{R}$ and $\eta\colon [0,\infty) \to [0,\infty)$ defined by $$K(t) =  h(t) + t, \quad \eta(t)=t$$ are increasing functions. Moreover, $\eta$ is continuous and $\eta(0) = 0$. By Theorem \ref{relativamente-compacto}, $B$ is relatively compact on $G([t_0,t_0+\sigma],\mathbb{R}^n)$.

	\noindent \textbf{Statement 6:} We conclude that $\Gamma$ is completely continuous.	In fact, let $(x_n)_{n\in \mathbb{N}} \subset A$ be a bounded sequence on $\left\|\cdot\right\|_{X}$ norm 
	and let $t \in [t_0,t_0+\sigma]$. By axiom \ref{A2}, we obtain
	\begin{align*}\left\|x_n(t)\right\| & \leq k_1(t-t_0)\left\|(x_n)_t\right\|_{\mathcal{B}} \\[6pt]
		& \leq k_1(t-t_0)\left( k_2(t-t_0)\left\|(x_n)_{t_0}\right\|_{\mathcal{B}}+k_3(t-t_0)\sup_{u\in [t_0,t]}\left\|x_n(u)\right\|\right) \\
		& \leq \sup_{u\in [0,\sigma]}k_1(u)\left( \sup_{u\in [0,\sigma]} k_2(u)\left\|(x_n)_{t_0}\right\|_{\mathcal{B}}+ \sup_{u\in [0,\sigma]} k_3(u)\sup_{u\in [t_0,t_0+\sigma]}\left\|x_n(u)\right\|\right) \\[7pt]
		& \leq D \|x_n\|_X,
	\end{align*}
	where $$D = \max\left\{\sup_{u\in [0,\sigma]}k_1(u)\sup_{u\in [0,\sigma]}k_2(u),\sup_{u\in [0,\sigma]}k_1(u)\sup_{u\in [0,\sigma]}k_3(u)\right\}.$$ This inequality proves that $(x_n)$ restricted to the interval $[t_0,t_0+\sigma]$ is bounded on the space $G([t_0,t_0+\sigma],\mathbb{R}^n)$. Consequently, by the last statement, there exists a subsequence $(x_{n_k})_{k\in \mathbb{N}}$ such that $(\Gamma(x_{n_k}))_{k\in \mathbb{N}}$ is convergent on $\left\|\cdot\right\|_{\infty}$ norm. If we denote its limit by $y $, then the function $\bar{y}\colon (-\infty,t_0+\sigma] \to \mathbb{R}^n$ given by	$$\bar{y}(t) = \begin{cases}
		\phi(t-t_0), & t \in (-\infty,t_0], \\
		y(t), & t \in [t_0,t_0+\sigma], 
	\end{cases} $$
	is well--defined and is such that 
	\begin{align}\left\|\Gamma(x_{n_k})-\bar{y}\right\|_{X}& \leq \left\|(\Gamma x_{n_k}-\bar{y})_{t_0}\right\|_{\mathcal{B}} + \sup_{u\in[t_0,t_0+\sigma]} \left\|(\Gamma(x_{n_k})-\bar{y})(u)\right\| \nonumber\\ & \leq \left\|(x_{n_k})_{t_0}-\phi\right\|_{\mathcal{B}} + \left\|\Gamma(x_{n_k})-y\right\|_{\infty} \label{d} = \left\|\Gamma(x_{n_k})-y\right\|_{\infty} .
	\end{align}
	Passing (\ref{d}) to limit when $k \to \infty$, we conclude that $(\Gamma(x_{n_k}))_{k}$ converges to $\bar{y}$ on $\left\|\cdot\right\|_{X}$ norm. Since $A$ is closed, $\bar{y} \in A$. We conclude that $\Gamma$ is completely continuous.	
	
	Finally, after all statements together with Theorem \ref{punto-fijo}, we conclude the desired result.
\end{proof}

In what follows, we present a result which ensures the uniqueness of solutions of \eqref{eq3.1}.
\begin{theorem}
	Let $\mathcal{B} \subset G((-\infty,0],\mathbb{R}^n)$ be a Banach space satisfying axioms \ref{a1}--\ref{A3}, $\phi \in \mathcal{B}$ and $g\colon [t_0,t_0+\sigma] \to \mathbb{R}$ be a nondecreasing and left--continuous function. If $f\colon [t_0,t_0+\sigma] \times \mathcal{B} \to \mathbb{R}^n$ and $\rho\colon [t_0,t_0+\sigma] \times \mathcal{B}\to \mathbb{R}$ are functions that satisfy the properties \ref{(H1)}--\ref{(H6)}, then the problem \eqref{eq3.1} possesses a unique solution on $(-\infty, t_0 + \sigma]$.
\end{theorem}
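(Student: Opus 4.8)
The plan is to reduce the uniqueness of solutions to a single application of the Gronwall inequality for the Perron--Stieltjes integral (Theorem~\ref{Gronwall}); this is precisely why the extra hypothesis that $g$ is left--continuous is imposed in this theorem. Suppose $x,y\colon(-\infty,t_0+\sigma]\to\mathbb{R}^n$ are two solutions of \eqref{eq3.1} with $x_{t_0}=y_{t_0}=\phi$, and set $z\coloneqq x-y$. Since $x(t)=y(t)=\phi(t-t_0)$ for $t\le t_0$, we have $z_{t_0}=0$ and $z|_{[t_0,t_0+\sigma]}$ is regulated, so axiom~\ref{A2} applies to $z$; in particular \ref{A2}(c) gives $\|z_s\|_{\mathcal{B}}\le k_3(s-t_0)\sup_{u\in[t_0,s]}\|z(u)\|$ for every $s\in[t_0,t_0+\sigma]$, because the term $\|z_{t_0}\|_{\mathcal{B}}$ vanishes.

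First I would subtract the two integral equations and estimate $\|z(t)\|$ exactly as in the continuity argument (Statement 4 in the proof of Theorem~\ref{48}): insert the intermediate term $f(s,y_{\rho(s,x_s)})$, then invoke \ref{(H3)}, \ref{(H4)} and \ref{(H6)} together with \ref{A2}(c) applied both at the argument $\rho(s,x_s)\le s$ and at $s$. Using that $L,L_2,L_3$ are regulated on the compact interval $[t_0,t_0+\sigma]$, hence bounded there, and that $k_3$ is locally bounded, this produces a constant $\ell>0$ for which
\[
\|z(t)\|\le \ell\int_{t_0}^{t}\sup_{u\in[t_0,s]}\|z(u)\|\,\mathrm{d}g(s),\qquad t\in[t_0,t_0+\sigma].
\]
Next I would pass to the running supremum $\psi(t)\coloneqq\sup_{u\in[t_0,t]}\|z(u)\|$, which is nondecreasing, hence regulated, hence Perron--Stieltjes integrable against the nondecreasing $g$. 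Since the right--hand side above is nondecreasing in $t$, replacing $t$ by an arbitrary $\tau\le t$ and taking the supremum over $\tau\in[t_0,t]$ on the left keeps the bound valid, so $\psi(t)\le \ell\int_{t_0}^{t}\psi(s)\,\mathrm{d}g(s)$ on $[t_0,t_0+\sigma]$. Applying Theorem~\ref{Gronwall} to $\psi$ with $k=0$ (and with $g$ replaced by the nonnegative, nondecreasing, left--continuous function $g-g(t_0)$, which does not change $\mathrm{d}g$) yields $\psi\equiv0$, i.e.\ $x(u)=y(u)$ for $u\in[t_0,t_0+\sigma]$; combined with $x=y$ on $(-\infty,t_0]$ this gives $x\equiv y$ on $(-\infty,t_0+\sigma]$.

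The first two steps are essentially a rerun of the computations already carried out for the continuity of $\Gamma$, so the only genuinely delicate points are: (i) confirming that the running supremum $\psi$ is an admissible input for Theorem~\ref{Gronwall}, which rests on the elementary fact that a monotone real function is regulated and therefore Perron--Stieltjes integrable with respect to a nondecreasing function; and (ii) making sure that the left--continuity of $g$ — assumed here but not in Theorem~\ref{48} — is exactly what licenses the Gronwall estimate. I do not expect any obstacle beyond this bookkeeping.
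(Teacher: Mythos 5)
Your proposal is correct and follows essentially the same route as the paper: subtract the two integral equations, reuse the Lipschitz-type estimate from the continuity argument (via \ref{(H3)}, \ref{(H4)}, \ref{(H6)} and axiom \ref{A2}), pass to the running supremum $\psi$, and apply the Perron--Stieltjes Gronwall inequality with $k=0$. Your extra remarks on why $\psi$ is an admissible integrand and on the role of left--continuity of $g$ are accurate bookkeeping that the paper leaves implicit.
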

\begin{proof}
	If $x,y $ are solutions of (\ref{eq3.1}), then by following the same steps as in the proof of Theorem \ref{48}, we get
	\begin{eqnarray*}
		\left\|x(t)-y(t)\right\| & =  & \left\|\left(\Gamma x - \Gamma y\right)(t)\right\| \nonumber \\
		& \leq &  \int_{t_0}^{t} L(s) \sup_{u \in [0,\sigma]}k_3(u)\sup_{u\in [t_0,s]}\left\| (x-y)(u)\right\| {\rm d}g(s) + \\
		& \ \ + & \ \ \int_{t_0}^{t}L_2(s)L_3 (s) k_3(s-t_0) \sup_{u \in [t_0,s]} \left\| (x-y)(u)\right\| {\rm d}g(s)\nonumber \\
		&\label{g} \leq & \int_{t_0}^{t} (L(s)+L_2(s)L_3(s)) \sup_{u \in [0,\sigma]}k_3(u)\sup_{u\in [t_0,s]}\left\| (x-y)(u)\right\| {\rm d}g(s).
	\end{eqnarray*}
	Let $\psi(v) = \displaystyle\sup_{u \in [t_0,v]}\left\|x(u)-y(u)\right\|$. Since $x,y$ are regulated functions, it follows that $\psi$ is also regulated, and thus, Perron--Stieltjes integrable with respect to the function $g$. Therefore, 
	\begin{align*}
		\psi(t) & \leq \int_{t_0}^{t}  (L(s)+L_2(s)L_3(s)) \sup_{u \in [0,\sigma]}k_3(u)\psi(s) {\rm d}g(s) \leq  K\int_{t_0}^{t}\psi(s) {\rm d}g(s),
	\end{align*}
	where $$K =\sup_{u \in [t_0,t_0+\sigma]}(L(u)+L_2(u)L_3(u)) \sup_{u \in [0,\sigma]}k_3(u).$$ Applying Theorem \ref{Gronwall}, we get $\psi(t) \leq 0$. 
	Since $\psi(t) \geq 0$ by definition, it follows the desired result.
\end{proof}

In sequel, we exemplify three functions $f,g$ and $\rho$ and a phase space that fit all the hypotheses of Theorem \ref{48}.
\begin{example}
Let $q\colon (-\infty,0]\to \mathbb{R}$ be the function $q(\theta) = e^{\theta}$ and choose the phase space $ \mathcal{B} = BG_{q } ( ( -\infty,0],\mathbb{R})$ defined as in Example \ref{20}. Let $T\colon(-\infty,0]\to \mathbb{R}$ be a bounded continuous function such that:
\begin{enumerate}[label=(\alph*)]
	\item $\displaystyle\frac{T(\theta)}{q(\theta)}$ is bounded.
	\item $\int_{-\infty}^{0}|T(\theta)|q(\theta) {\rm d}\theta < \infty$.
	\item There exists a constant $D>0$ such that $\int_{-\infty}^{0}|T(\theta-t_2)-T(\theta-t_1)| {\rm d}\theta \leq D|t_2-t_1|$ for all $0 \leq t_1,t_2$.
\end{enumerate}
Define the functions $f\colon [0,\infty) \times \mathcal{B} \to \mathbb{R}$ and $\rho\colon [0,\infty)\times \mathcal{B} \to [0, \infty)$ by \begin{equation*}
	\label{22}
	f(t,x) = \cos^2(t)\int_{-\infty}^{0} T(\theta)\tanh(x(\theta)) {\rm d}\theta, \quad \quad \rho(t,x) = t-\int_{-\infty}^{-t} |T(\theta)|\tanh(|x(\theta-t)|){\rm d}\theta.
\end{equation*}
It is immediate that for all $x\in X$, the function $t \mapsto \rho(t,x_t)$, $t \in [0,\sigma]$, is nondecreasing, satisfies $\rho(t,x_t) \leq t$ and $x_{\rho(0,x_0)}\in \mathcal{B}$. In addition, by definition of $\rho$, 
\begin{align}
	\rho(s,y )-\rho(s,x )  & = \int_{-\infty}^{-s} |T(\theta)|\left(\tanh(|y(\theta-s)|) - \tanh(|x(\theta-s)|) \right) {\rm d}\theta \nonumber \\
	& = \int_{-\infty}^{-2s} |T(u+s)|\left(\tanh(|y(u)|) - \tanh(|x(u)|) \right) {\rm d}u 	 \label{74} 
\end{align} 
By \eqref{74}, we have 
\begin{align*}
	|\rho(s,y )-\rho(s,x )|  & \leq \int_{-\infty}^{0} |T(u+s)||\tanh(|y(u)|) - \tanh(|x(u)|) | {\rm d}u \nonumber \\
	&\leq \int_{-\infty}^{0} |T(u+s)|\frac{|y(u)-x(u)|}{q(u)} q(u)  {\rm d}u \nonumber \\
	& \leq  \sup_{\theta \leq 0} |T(\theta)| \int_{-\infty}^{0} q(u) {\rm d}u \|y-x\|_{\mathcal{B}}\,. 
\end{align*}

Now, since $|\tanh z| \leq 1$ for all $z \in \mathbb{R}$ and since there is constant $C>0$ such that $|T(\theta)|/q(\theta) \leq C$ for all $\theta \leq 0$, we get $$|f(t,x)| \leq \int_{-\infty}^{0} \frac{|T(\theta)|}{q(\theta)}|\tanh(x(\theta))|q(\theta) {\rm d}\theta \leq \int_{-\infty}^{0} Cq(\theta) {\rm d}\theta = C$$
for all $(t,x) \in [0,\infty)\times \mathcal{B}$. Additionally, if $(t,x),(s,y) \in [0,\infty)\times \mathcal{B}$, then
\begin{align}\label{26}
	f(t,x)-f(s,y) 	& = \cos^2(t)\int_{-\infty}^{0} T(\theta)\tanh(x(\theta)) {\rm d}\theta-\cos^2(s)\int_{-\infty}^{0} T(\theta)\tanh(y(\theta)) {\rm d}\theta \nonumber\\
	& = \cos^2(t)\int_{-\infty}^{0} \hspace{-0.3cm}T(\theta)(\tanh(x(\theta))-\tanh(y(\theta))) {\rm d}\theta \nonumber \\
	&\hspace{0.7cm} + (\cos^2(t)-\cos^2(s))\int_{-\infty}^{0}\hspace{-0.3cm} T(\theta)\tanh(y(\theta)) {\rm d}\theta\,.
\end{align}
In particular, for all $x \in X$ and all $ 0 \leq b \leq a \leq \sigma $,
\begin{align}\label{25}
	f(t,x_a)- f(s,x_b)  & = \cos^2(t)\int_{-\infty}^{0} \hspace{-0.3cm}T(\theta)(\tanh(x(\theta+a))-\tanh(x(\theta+b))) {\rm d}\theta \nonumber\\
	& \hspace{0.5cm}+ (\cos^2(t)-\cos^2(s))\int_{-\infty}^{0}\hspace{-0.3cm} T(\theta)\tanh(x(\theta+b)) {\rm d}\theta \nonumber\\
	&  = \cos^2(t) \left( \int_{-\infty}^{0} (T(u-a)-T(u-b))\tanh(x(u)){\rm d}u\right. \nonumber\\
	&\hspace{-1.0cm} \left. + \int_{-b}^{0} (T(u+b-a)-T(u))\tanh(x(u+b)){\rm d}u + \int_{b-a}^{0} T(u)\tanh(x(u+a)){\rm d}u \right)\nonumber\\
	&\hspace{0.5cm}+(\cos^2(t)-\cos^2(s))\int_{-\infty}^{0}\hspace{-0.3cm} T(\theta)\tanh(x(\theta+b)) {\rm d}\theta.
\end{align} By \eqref{26},
\begin{align*}
	|f(s,x)-f(s,y)| & \leq \int_{-\infty}^{0} |T(\theta)||\tanh(x(\theta))-\tanh(y(\theta))| {\rm d}\theta \nonumber\\
	&\leq \int_{-\infty}^{0} |T(\theta)|\frac{|x(\theta)-y(\theta)|}{q(\theta)}q(\theta) {\rm d}\theta  \nonumber\\
	& \leq  \int_{-\infty}^{0} |T(\theta)| q(\theta) {\rm d}\theta \;  \|x-y\|_{\mathcal{B}}.
\end{align*}
Then, $$\left|\int_{u_1}^{u_2} f(s,x)-f(s,y) {\rm d}s \right|\leq  \int_{u_1}^{u_2} \overline{C}  \|x-y\|_{\mathcal{B}} {\rm d}s,$$ for all $u_1,u_2 \geq 0$, where $\overline{C} = \int_{-\infty}^{0} |T(\theta)| q(\theta) {\rm d}\theta$. By \eqref{25}, we obtain
\begin{align*}
	|f(s,x_a)-f(s,x_b) |& \leq \int_{-\infty}^{0} |T(u-a)-T(u-b)|{\rm d}u \nonumber \\
	&\hspace{0.6cm} + \int_{-\infty}^{0} |T(u+b-a)-T(u)|{\rm d}u + \int_{b-a}^{0} |T(u)||\tanh(x(u+a))|{\rm d}u  \nonumber\\
	& \leq 2D|a-b|+\overline{D}|a-b|,
\end{align*} where $\overline{D} =\sup_{\theta \leq 0} |T(\theta)|$. Therefore, all hypotheses of Theorem \ref{48} are satisfied for the case where the function $g\colon[0,\sigma] \to \mathbb{R}$ is given by $g(s) = s$. The continuity of $g$ is enough to conclude that	\begin{equation*}
	\begin{array}{rcl}
		x(t) &=& x(t_0)+ \displaystyle\int_{t_0}^{t} f(s,x_{\rho(s,x_s)}){\rm d}g(s), \quad t \in [t_0,t_0+\sigma], \\ x_{t_0}&=&\phi,
	\end{array}
\end{equation*}
has a unique solution.
\end{example}

\begin{remark}
	Since the example above uses an abstract function $T\colon (-\infty,0]\to \mathbb{R}$ that satisfies some assertions, the question of the existence of such a function arises. Indeed, it is possible to verify that the function $T(\theta)=e^{-\theta^2+\theta}$ answers positively this question.
\end{remark}


\section{Periodic averaging principle} \label{teoremaprincipalmesquita}

In this section, our goal is to prove a periodic averaging theorem for measure functional differential equation with state--dependent delays.

Let $\varepsilon_0>0$, $L>0$, $T > 0$. Consider a pair of bounded functions $f \colon [0,\infty) \times \mathcal{B} \to \mathbb{R}^n$, $g \colon [0,\infty) \times \mathcal{B} \times (0,\varepsilon_0] \to \mathbb{R}^n$, a left-continuous nondecreasing function $h \colon [0,\infty)\to \mathbb{R}$ and a function $\rho \colon [0,\infty)\times\mathcal{B}  \times (0,\varepsilon_0]\to [0,\infty)$ such that the following conditions are satisfied:

\begin{enumerate}
	
	\item[\namedlabel{M1}{(C1)}] For all $x \in \mathcal{B}$, the integrals $\int_{u_1}^{u_2} f(s,x) {\rm d}h(s)$ and $\int_{u_1}^{u_2} g(s,x, \varepsilon) {\rm d}h(s)$ exist
for all $u_1, u_2 \in [0, +\infty)$ and $\varepsilon \in (0,\varepsilon_{0}]$ in the sense of Perron--Stieltjes.
	\item[\namedlabel{M2}{(C2)}] $f$ is $T$--periodic with respect to the first variable.
	
	\item[\namedlabel{M3}{(C3)}] There exists a constant $\alpha >0 $ such that $h(t+T)-h(t) = \alpha$ for all $t \geq 0$.
	
	\item[\namedlabel{M4}{(C4)}] There exists a constant $C >0 $ such that 
$$\left\| \int_{u_1}^{u_2} [f(s, x)-f(s, y)] {\rm d} h(s)\right\| \leq \int_{u_1}^{u_2} C\left\|x -y\right\|_{\mathcal{B}} {\rm d}h(s)$$
for all $x,y\in \mathcal{B}$ and $u_1, u_2 \in [0, +\infty)$.
	
	\item[\namedlabel{M5}{(C5)}] The integral $$f_0(x)= \frac{1}{T} \int_{0}^{T} f(s, x) {\rm d}h (s)$$ exists in the sense of Perron--Stieltjes for all $x \in \mathcal{B}$.
	
\item[\namedlabel{M6}{(C6)}] For all $x \colon (-\infty,\alpha] \to \mathbb{R}^n$, $\alpha >0$, such that $x_0 = \phi \in \mathcal{B}$ and $x|_{[0,\alpha]}$ is regulated, there exists a constant $C_2>0$ such that 
$$\left\| \int_{u_1}^{u_2} [f(s, x_a)-f(s, x_b)]{\rm d} h(s)\right\| \leq \int_{u_1}^{u_2} C_2\left|a-b\right| {\rm d}h(s)$$ 
for all $x \in \mathcal{B}$ and $a, b, u_1, u_2 \in [0, \infty)$.

	\item[\namedlabel{M7}{(C7)}] For all $\varepsilon \in (0, \varepsilon_0]$, $t \geq 0$ and $x \in \mathcal{B}$, $\rho(t,x, \varepsilon) \leq t$ and $t \mapsto \rho(t, x_t, \varepsilon)$ is a regulated function and if $x$ is regulated, then $x_{\rho(t,x_t)}$ is also regulated with respect to $t$. 	
	
	\item[\namedlabel{M8}{(C8)}] For all $x:(-\infty,\alpha] \to \mathbb{R}^n$, $\alpha >0$, such that $x_0 = \phi \in \mathcal{B}$ and $x|_{[0,\alpha]}$ is regulated, there exists a constant $C_3>0$ such that 
$$\left| \rho(t, x_a, \varepsilon) - \rho(t, x_b, \varepsilon) \right| \leq \varepsilon C_3 |a - b| $$ for all $a, b, t \in [0,\infty)$, $x \in \mathcal{B}$ and $\varepsilon \in (0, \varepsilon_0]$. 

\item[\namedlabel{M9}{(C9)}] There exists a constant $C_4>0$ such that $$\left|\rho(s,y, \varepsilon)-\rho(s,x, \varepsilon)\right| \leq C_4\left\|y- x\right\|_{\mathcal{B}}$$ for all $s \in [0,\infty)$, $\varepsilon \in (0, \varepsilon_0]$ and $x, y \in \mathcal{B}$.

\end{enumerate}

Now, we are ready to prove our periodic averaging theorem for measure FDEs with state--dependent delays. 

\begin{theorem}\label{3}
	Let $\varepsilon_0 > 0$, $\mathcal{B} \subset G((-\infty,0],\mathbb{R}^n)$ be a Banach space satisfying axioms \ref{a1}--\ref{A3}, $h:[0, +\infty) \to \mathbb{R}$ be a left--continuous nondecreasing function and $\phi \in \mathcal{B}$. Assume that $f:[0, +\infty) \times \mathcal{B} \to \mathbb{R}^n$, $g: [0, +\infty) \times \mathcal{B} \times (0, \varepsilon_0] \to \mathbb{R}^n$ are bounded functions and $\rho: [0, +\infty) \times \mathcal{B}\to [0, +\infty)$ is a function. Also, suppose that the properties \ref{M1}--\ref{M9} are satisfied. Suppose that for all $\varepsilon \in (0,\varepsilon_0]$, the initial value problems
	\begin{equation}\label{eq-1}
\left\{
	\begin{array}{rcl}
	x(t) &=& \displaystyle x(0) +\varepsilon\int_{0}^{t} f\left(s, x_{\rho(s,x_s,\varepsilon)}\right){\rm d}h(s) + \varepsilon^2\int_{0}^{t}g\left(s, x_{\rho(s,x_s,\varepsilon)},\varepsilon \right){\rm d}h(s), \\ x_{0}&=& \phi,
	\end{array}
\right.
	\end{equation}
and 
\begin{equation}\label{eq-2}
\left\{
	\begin{array}{rcl}
	y(t) &=& \displaystyle y(0) +\varepsilon\int_{0}^{t} f_0\left(y_{\rho(s,y_s, \varepsilon)}\right){\rm d}s \\ y_{0}&=& \phi,
	\end{array}
\right.
	\end{equation}
	have solutions $x^{\varepsilon},y^{\varepsilon} : (-\infty,L/\varepsilon] \to \mathbb{R}^n$. Then there exists a $J>0$ such that the inequality
\begin{equation}\label{eq-teorema}
\left\|x^{\varepsilon}(t) - y^{\varepsilon}(t)\right\|_X \leq J\varepsilon  \ \ \textrm{for all} \ \ t \in (-\infty, L/\varepsilon]
\end{equation}
holds. 
	
\end{theorem}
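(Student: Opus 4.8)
The plan is to run the classical averaging argument: rescale time, compare the two solutions on the stretched interval, and close the estimate with Gronwall's inequality (Theorem~\ref{Gronwall}). First I would introduce the scaled time $\tau = \varepsilon t$ and set $z^{\varepsilon}(\tau) = x^{\varepsilon}(\tau/\varepsilon)$, $w^{\varepsilon}(\tau) = y^{\varepsilon}(\tau/\varepsilon)$, so that both live on $\tau \in (-\infty, L]$ and the $\varepsilon$ in front of the integrals is absorbed; equivalently, one may work directly in the original time variable and simply carry the factor $\varepsilon$ through, which is cleaner here since the measure $h$ does not rescale nicely. So I would actually keep $t$ and estimate $\|x^{\varepsilon}_t - y^{\varepsilon}_t\|_{\mathcal{B}}$ directly. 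Using axiom~\ref{A2}(c), it suffices to bound $\sup_{u \in [0,t]}\|x^{\varepsilon}(u) - y^{\varepsilon}(u)\|$, so define $\psi(t) = \sup_{u \in [0,t]}\|x^{\varepsilon}(u) - y^{\varepsilon}(u)\|$ and aim for an integral inequality of the form $\psi(t) \le J_1 \varepsilon + J_2 \varepsilon \int_0^t \psi(s)\,\mathrm{d}h(s)$ plus a ``periodic averaging remainder'' term that is $O(\varepsilon)$ uniformly on $[0,L/\varepsilon]$.

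The decomposition I would use is the standard one. Write
$$
x^{\varepsilon}(t) - y^{\varepsilon}(t) = \varepsilon\int_0^t \left[ f\!\left(s, x^{\varepsilon}_{\rho(s,x^{\varepsilon}_s,\varepsilon)}\right) - f_0\!\left(y^{\varepsilon}_{\rho(s,y^{\varepsilon}_s,\varepsilon)}\right)\right]\mathrm{d}h(s) + \varepsilon^2 \int_0^t g\!\left(s, x^{\varepsilon}_{\rho(s,x^{\varepsilon}_s,\varepsilon)},\varepsilon\right)\mathrm{d}h(s),
$$
where I am slightly abusing notation since the $f_0$-equation integrates against $\mathrm{d}s$ rather than $\mathrm{d}h$; condition~\ref{M3} is exactly what reconciles this, because over each period the increment of $h$ is the constant $\alpha$, so $\frac{1}{T}\int_0^T f(s,x)\,\mathrm{d}h(s)$ plays the role of a genuine average and the difference between $\int_0^t f(s,x)\,\mathrm{d}h(s)$ and $\frac{t}{T}\int_0^T f(s,x)\,\mathrm{d}h(s)$ is bounded uniformly in $t$ (it only sees the last incomplete period). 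The $\varepsilon^2 g$ term contributes $O(\varepsilon^2 \cdot L/\varepsilon) = O(\varepsilon)$ on the long interval by boundedness of $g$ and \ref{M1}. For the main term I would split it into: (a) $f(s, x^{\varepsilon}_{\rho(s,x^{\varepsilon}_s,\varepsilon)}) - f(s, y^{\varepsilon}_{\rho(s,y^{\varepsilon}_s,\varepsilon)})$, handled by \ref{M4}, \ref{M6}, \ref{M8}, \ref{M9} together with axiom~\ref{A2}(c) exactly as in Statement~4 of Theorem~\ref{48} — this produces the $\varepsilon \int_0^t C'\psi(s)\,\mathrm{d}h(s)$ piece, where crucially \ref{M8} gives a factor $\varepsilon$ on the delay-perturbation term so it is even higher order; (b) $f(s, y^{\varepsilon}_{\rho(s,y^{\varepsilon}_s,\varepsilon)}) - f_0(y^{\varepsilon}_{\rho(s,y^{\varepsilon}_s,\varepsilon)})$, the genuine averaging discrepancy, which I would control by the standard trick of writing it as a sum over complete periods $[kT,(k+1)T]$ plus a remainder, on each of which $\int_{kT}^{(k+1)T}[f(s,\xi) - f_0(\xi)]\,\mathrm{d}h(s) = 0$ for \emph{frozen} $\xi$ by \ref{M2}, \ref{M3}, \ref{M5}, and then estimating the error of freezing the argument $y^{\varepsilon}_{\rho(s,y^{\varepsilon}_s,\varepsilon)}$ over one period. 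Since $y^{\varepsilon}$ solves \eqref{eq-2} with an $\varepsilon$ in front, $y^{\varepsilon}$ varies by $O(\varepsilon)$ over an interval of length $T$; combined with \ref{M4} (Lipschitz in $\mathcal{B}$) and \ref{A2}(c), the per-period freezing error is $O(\varepsilon)$ times the period-increment of $h$, and summing the $\lfloor t/T\rfloor = O(L/\varepsilon)$ periods gives a total of $O(\varepsilon \cdot L/\varepsilon) = O(1)$ — wait, that is the point requiring care: it must come out $O(\varepsilon)$ overall, which it does once one notes that the prefactor is $\varepsilon$ (from the $\varepsilon\int_0^t$ in \eqref{eq-1}–\eqref{eq-2}) multiplying a quantity that is itself $O(1)$, hence the whole averaging term is $O(\varepsilon)$.

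Collecting everything, I obtain $\psi(t) \le J_0\varepsilon + K\varepsilon\int_0^t \psi(s)\,\mathrm{d}h(s)$ for constants $J_0, K$ independent of $\varepsilon$ and $t \in [0,L/\varepsilon]$; since $h$ is left-continuous and nondecreasing and $h(t)-h(0) \le h(L/\varepsilon)-h(0)$, and by \ref{M3} the latter is at most $\alpha(\lceil L/(\varepsilon T)\rceil + 1) \le \alpha(L/(\varepsilon T) + 2)$, applying the Gronwall inequality of Theorem~\ref{Gronwall} gives $\psi(t) \le J_0\varepsilon\, e^{K\varepsilon(h(t)-h(0))} \le J_0\varepsilon\, e^{K\alpha(L/T + 2\varepsilon_0)}$, which is $J\varepsilon$ with $J = J_0 e^{K\alpha(L/T+2\varepsilon_0)}$. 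Finally, passing from $\psi$ to $\|x^{\varepsilon}_t - y^{\varepsilon}_t\|_{\mathcal{B}}$ via \ref{A2}(c) (with $x^{\varepsilon}_0 = y^{\varepsilon}_0 = \phi$, so the $k_2$-term vanishes) and then to $\|x^{\varepsilon}(t) - y^{\varepsilon}(t)\|_X$ by the definition \eqref{e12} of the $X$-norm yields \eqref{eq-teorema}. The main obstacle is step (b): making the per-period averaging estimate rigorous with the Perron--Stieltjes integral and a regulated (discontinuous) integrand, in particular justifying the ``freezing'' of the $\mathcal{B}$-valued argument $y^{\varepsilon}_{\rho(s,y^{\varepsilon}_s,\varepsilon)}$ inside the integral over $[kT,(k+1)T]$ — this needs Lemma~\ref{e10}/Lemma~\ref{80} to know the argument is a regulated $\mathcal{B}$-valued path, \ref{M4} to Lipschitz-control the substitution error, and condition \ref{M3} to kill the frozen integral exactly — and then summing the $O(L/\varepsilon)$ such errors while keeping the total $O(\varepsilon)$.
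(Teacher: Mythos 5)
Your overall architecture is exactly the paper's: split the difference into a Lipschitz part (handled by \ref{M4}, \ref{M6}, \ref{M9} and axiom \ref{A2}(c), feeding the Gronwall kernel), an averaging discrepancy (handled by decomposing $[0,t]$ into $\lfloor t/T\rfloor$ complete periods plus a remainder and using \ref{M2}, \ref{M3}, \ref{M5} to kill the frozen integrals), and the $\varepsilon^2 g$-term (which is $O(\varepsilon)$ on $[0,L/\varepsilon]$ by \ref{M3}); then close with Theorem \ref{Gronwall} and the bound $\varepsilon(h(L/\varepsilon)-h(0))\leq(\tfrac{L}{T}+\varepsilon_0)\alpha$. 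The bookkeeping you worry about ($O(1)$ total discrepancy times the outer prefactor $\varepsilon$) is resolved the same way in the paper, and your final passage to the $X$-norm via $x_0^{\varepsilon}=y_0^{\varepsilon}=\phi$ is also what the paper does.

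There is, however, a genuine gap in your step (b), the per-period freezing estimate. You propose to freeze the whole $\mathcal{B}$-valued argument $y^{\varepsilon}_{\rho(s,y^{\varepsilon}_s,\varepsilon)}$ to a constant $\xi$ and to control the substitution error by ``$y^{\varepsilon}$ varies by $O(\varepsilon)$ over an interval of length $T$'' combined with the Lipschitz condition \ref{M4}. This mechanism fails here for two reasons. First, $\|y^{\varepsilon}_a-y^{\varepsilon}_b\|_{\mathcal{B}}$ is \emph{not} small for nearby $a,b$: the history segment drags the initial datum $\phi$ along, and translation is not continuous on spaces of regulated functions (a single jump in $\phi$ makes $\|\phi(\cdot+a-b)-\phi(\cdot)\|$ of order the jump size no matter how close $a$ and $b$ are); the slow variation of $y^{\varepsilon}(u)$ for $u\geq 0$ does not help with the tail. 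Second, nothing in \ref{M1}--\ref{M9} controls the variation of $\rho(s,\cdot,\varepsilon)$ in its \emph{first} argument, so the quantity $\rho(s,y_{(i-1)T}^{\varepsilon},\varepsilon)$ cannot be replaced by a constant over the period. The paper's proof avoids both problems: it freezes only the \emph{second} argument of $\rho$ (replacing $y_s^{\varepsilon}$ by $y_{(i-1)T}^{\varepsilon}$), bounds the resulting change of the delay by \ref{M8}, whose factor $\varepsilon C_3|s-(i-1)T|$ is precisely what makes the sum over the $O(1/(\varepsilon T))$ periods come out $O(1)$, and converts that delay change into a change of $f$ (and of $f_0$) via \ref{M6} rather than \ref{M4}. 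So the fix is to replace your ``slow variation of $y^{\varepsilon}$ plus \ref{M4}'' argument by the pair \ref{M8}+\ref{M6}; with that substitution your proof matches the paper's.
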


\begin{proof}
		Since $f:[0,\infty) \times \mathcal{B} \to \mathbb{R}^n$ and $g:[0,\infty)\times \mathcal{B} \times (0,\varepsilon_{0}] \to \mathbb{R}^n $ are bounded, there exists $M>0$ such that $\left\|f(t,x)\right\|\leq M $ and $\left\|g(t, x, \varepsilon)\right\|\leq M$ for all $x \in \mathcal{B}$, $t \geq 0$ and $\varepsilon \in (0,\varepsilon_{0}]$. Then, for all $x \in \mathcal{B}$, we have
	\begin{equation}\label{f0}
	\left\|f_0(x)\right\|  \leq \frac{1}{T} \int_{0}^{T} \left\|f(s, x)\right\|{\rm d}h(s) \leq \frac{M}{T}(h(T)-h(0)) =  \frac{M}{T}\alpha.
	\end{equation}
For $t \in [0,L/\varepsilon]$, if $x^{\varepsilon}$ and $y^{\varepsilon}$ are solutions of (\ref{eq-1}) and (\ref{eq-2}), then
\begin{equation}\label{eq-norma-X}
\|x^\varepsilon (t) - y^{\varepsilon} (t)\|_X   =   \| (x^\varepsilon - y^\varepsilon)_{0}\|_{\mathcal{B}} + \sup_{t \in [0, L/\varepsilon]} \|x^{\varepsilon} (t) - y^{\varepsilon} (t)\|=  \sup_{t \in  [0, L/\varepsilon]} \|x^{\varepsilon} (t) - y^{\varepsilon} (t)\|. 
\end{equation}
On the other hand, for $t \in [0, L/\varepsilon]$, by the conditions \ref{M1}--\ref{M9}, we get
$$
\left\| x^{\varepsilon}(t) - y^{\varepsilon}(t)\right\| $$
$$=   \left\|\varepsilon\int_{0}^{t} f\left(s, x_{\rho(s,x_s^{\varepsilon}, \varepsilon)}^{\varepsilon}\right){\rm d}h(s) + \varepsilon^2\int_{0}^{t}g\left(s, x_{\rho(s,x_s^{\varepsilon}, \varepsilon)}^{\varepsilon}, \varepsilon \right){\rm d}h(s)-\varepsilon\int_{0}^{t} f_0\left(y_{\rho(s,y_s^{\varepsilon},\varepsilon)}^{\varepsilon}\right){\rm d}s  \right\| $$
$$ \leq  \varepsilon\left\|\int_{0}^{t} f\left(s, x_{\rho(s,x_s^{\varepsilon}, \varepsilon)}^{\varepsilon}\right) - f\left(s, y_{\rho(s,x_s^{\varepsilon}, \varepsilon)}^{\varepsilon}\right){\rm d}h(s)\right\|+ \varepsilon\left\|\int_{0}^{t} f\left(s, y_{\rho(s,x_s^{\varepsilon}, \varepsilon)}^{\varepsilon}\right) - f\left(s, y_{\rho(s,y_s^{\varepsilon}, \varepsilon)}^{\varepsilon}\right){\rm d}h(s)\right\|$$
$$ + \ \varepsilon\left\|\int_{0}^{t}f\left(s, y_{\rho\left(s,y_s^{\varepsilon}, \varepsilon\right)}^{\varepsilon} \right){\rm d}h(s) -\int_{0}^{t} f_0\left(y_{\rho(s,y_s^{\varepsilon}, \varepsilon)}^{\varepsilon}\right){\rm d}s  
	\right\| +\varepsilon^2M(h(t)-h(0)) $$
 $$\leq \varepsilon \int_{0}^{t} C \left\| x_{\rho(s,x_s^{\varepsilon}, \varepsilon)}^{\varepsilon}- y_{\rho(s,x_s^{\varepsilon}, \varepsilon)}^{\varepsilon}\right\|_{\mathcal{B}} {\rm d}h(s)+ \varepsilon\int_{0}^{t}C_2 \left|\rho(s,x_s^{\varepsilon}, \varepsilon) -  \rho(s,y_s^{\varepsilon}, \varepsilon)\right|{\rm d}h(s)$$
$$+ \varepsilon\left\|\int_{0}^{t} f\left(s, y_{\rho(s,x_s^{\varepsilon}, \varepsilon)}^{\varepsilon}\right){\rm d}h(s) - \int_{0}^{t} f_0\left(y_{\rho(s,y_s^{\varepsilon}, \varepsilon)}^{\varepsilon}\right){\rm d}s\right\|+\varepsilon^2M(h(t)-h(0)) $$
$$\leq \varepsilon C\int_{0}^{t} k_3(s) \sup_{u\in [0,s]} \left\| x^{\varepsilon}(u)- y^{\varepsilon}(u)\right\| {\rm d}h(s)+ \varepsilon C_2  C_4\int_{0}^{t}\left\| x_s^{\varepsilon} -  y_s^{\varepsilon} \right\|_{\mathcal{B}}{\rm d}h(s)$$
$$ +\varepsilon\left\|\int_{0}^{t} f\left(s, y_{\rho(s,y_s^{\varepsilon}, \varepsilon)}^{\varepsilon}\right){\rm d}h(s) - \int_{0}^{t} f_0\left(y_{\rho(s,y_s^{\varepsilon}, \varepsilon)}^{\varepsilon}\right){\rm d}s\right\|+\varepsilon^2M(h(t)-h(0))$$
	$$\leq \varepsilon C\int_{0}^{t} k_3(s) \sup_{u\in [0,s]} \left\| x^{\varepsilon}(u)- y^{\varepsilon}(u)\right\| {\rm d}h(s)+ \varepsilon C_2  C_4\int_{0}^{t} k_3 (s) \sup_{u \in [0, s]}\left\| x^{\varepsilon} (u) -  y^{\varepsilon} (u) \right\|{\rm d}h(s)$$
$$ +\varepsilon\left\|\int_{0}^{t} f\left(s, y_{\rho(s,y_s^{\varepsilon}, \varepsilon)}^{\varepsilon}\right){\rm d}h(s) - \int_{0}^{t} f_0\left(y_{\rho(s,y_s^{\varepsilon}, \varepsilon)}^{\varepsilon}\right){\rm d}s\right\|+\varepsilon^2M(h(t)-h(0))$$
$$\leq \varepsilon (C + C_2 C_4)\int_{0}^{t} k_3(s) \sup_{u\in [0,s]} \left\| x^{\varepsilon}(u)- y^{\varepsilon}(u)\right\| {\rm d}h(s)$$
\begin{equation}\label{eq-4.5}
+\varepsilon\left\|\int_{0}^{t} f\left(s, y_{\rho(s,y_s^{\varepsilon}, \varepsilon)}^{\varepsilon}\right){\rm d}h(s) - \int_{0}^{t} f_0\left(y_{\rho(s,y_s^{\varepsilon}, \varepsilon)}^{\varepsilon}\right){\rm d}s\right\|+\varepsilon^2M(h(t)-h(0)).
\end{equation}

	Let $p$ is the largest integer such that $pT \leq t$. Therefore, we have
	$$\left\|\int_{0}^{t} f\left(s, y_{\rho(s,y_s^{\varepsilon}, \varepsilon)}^{\varepsilon}\right){\rm d}h(s) - \int_{0}^{t} f_0\left(y_{\rho(s,y_s^{\varepsilon}, \varepsilon)}^{\varepsilon}\right){\rm d}s\right\| $$
$$ \leq \sum_{i=1}^{p} \left\|\int_{(i-1)T}^{iT} f\left(s, y_{\rho(s,y_s^{\varepsilon}, \varepsilon)}^{\varepsilon}\right)-f\left(s, y_{\rho\left(s,y_{(i-1)T}^{\varepsilon}, \varepsilon\right)}^{\varepsilon}\right) {\rm d}h(s) \right\| $$
$$ + \sum_{i=1}^{p} \left\| \int_{(i-1)T}^{iT} f\left(s, y_{\rho\left(s,y_{(i-1)T}^{\varepsilon}, \varepsilon\right)}^{\varepsilon}\right){\rm d}h(s) - \int_{(i-1)T}^{iT} f_0\left(y_{\rho(s,y_{(i-1)T}^{\varepsilon}, \varepsilon)}^{\varepsilon}\right) {\rm d}s \right\| $$
$$ + \sum_{i=1}^{p} \left\| \int_{(i-1)T}^{iT} [f_0\left(y_{\rho(s,y_{(i-1)T}^{\varepsilon}, \varepsilon)}^{\varepsilon}\right)- f_0\left(y_{\rho(s,y_s^{\varepsilon}, \varepsilon)}^{\varepsilon}\right)]{\rm d}s\right\| $$
\begin{equation}
\label{e16}
	+ \left\|\int_{pT}^{t} f\left(s, y_{\rho(s,y_s^{\varepsilon}, \varepsilon)}^{\varepsilon}\right){\rm d}h(s) - \int_{pT}^{t} f_0\left(y_{\rho(s,y_s^{\varepsilon}, \varepsilon)}^{\varepsilon}\right){\rm d}s\right\|.
	\end{equation}
For every $i \in \{1,2,\ldots,p\}$ and every $s \in [(i-1)T,iT]$, we obtain
\begin{align*}
\sum_{i=1}^{p} \left\|\int_{(i-1)T}^{iT} f\left(s, y_{\rho(s,y_s^{\varepsilon}, \varepsilon)}^{\varepsilon}\right) \right. & \left. -f(s, y_{\rho(s,y_{(i-1)T}^{\varepsilon}, \varepsilon)}^{\varepsilon}) {\rm d}h(s) \right\| \\
&  \leq \sum_{i=1}^{p} \int_{(i-1)T}^{iT} C_2 |{\rho(s,y_s^{\varepsilon}, \varepsilon)} - {\rho(s,y_{(i-1)T}^{\varepsilon}, \varepsilon)}| {\rm d}h(s) \\
&\leq \sum_{i=1}^{p} C_2 C_3 \varepsilon \int_{(i-1)T}^{iT} |s - iT +T| {\rm d}h(s) \\
&  \leq \sum_{i=1}^{p} C_2C_3 T \varepsilon (h(iT) - h((i-1)T)) \\
& 	= C_2 C_3 T \alpha p \varepsilon.
\end{align*}
Using this estimate and the fact that $pT \leq L/\varepsilon$, we get
\begin{equation}\label{eq-4.5-1}
\sum_{i=1}^{p} \left\|\int_{(i-1)T}^{iT} \left(f\left(s, y_{\rho(s,y_s^{\varepsilon}, \varepsilon)}^{\varepsilon}\right)-f\left(s, y_{\rho(s,y_{(i-1)T}^{\varepsilon}, \varepsilon)}^{\varepsilon}\right) \right) {\rm d}h(s) \right\| \leq C_2 C_3 \alpha L.
\end{equation}
	On the other hand, notice that for $s \in [(i-1)T,iT]$, we have
\begin{align*}
\left\| f_0 \left(y_{\rho(s,y_s^{\varepsilon}, \varepsilon)}^{\varepsilon}\right) - f_0\left(y^{\epsilon}_{\rho(s,y_{(i-1)T}^{\varepsilon}, \varepsilon)}\right) \right\| & = \left\|\dfrac{1}{T}\int_{0}^{T} \left( f\left(u, y_{\rho(s,y_s^{\varepsilon}, \varepsilon)}^{\varepsilon}\right)-f\left(u, y_{\rho(s,y_{(i-1)T}^{\varepsilon}, \varepsilon)}^{\varepsilon}\right) \right) {\rm d}h(u) \right\| \\
& \leq \dfrac{1}{T}\int_{0}^{T} C_2 |{\rho(s,y_s^{\varepsilon}, \varepsilon)} - {\rho(s,y_{(i-1)T}^{\varepsilon}, \varepsilon)}| {\rm d}h(u) \\
&\leq \dfrac{1}{T}\int_{0}^{T} C_2  C_3 \varepsilon |s - (i-1)T|{\rm d}h(u) \\
&\leq C_2 C_3 \varepsilon \alpha.
\end{align*}
	Therefore, it implies that
\begin{eqnarray}
\sum_{i=1}^{p} \left\| \int_{(i-1)T}^{iT} f_0\left(y_{\rho(s,y_s^{\varepsilon}, \varepsilon)}^{\varepsilon}\right) - f_0\left(y_{\rho(s, y_{(i-1)T}^{\varepsilon}, \varepsilon)}^{\varepsilon}\right) {\rm d}s \right\| &\leq & \sum_{i=1}^{p}  \int_{(i-1)T}^{iT}  C_2 C_3 \varepsilon \alpha {\rm d}s \nonumber\\
&\leq & C_2 C_3 \varepsilon \alpha p T \nonumber\\
&\leq & C_2 C_3 \alpha L. \label{eq-4.5-2}
\end{eqnarray}
The fact that $f$ is $T$-periodic in the first variable and the definition of $f_0$ imply
	\begin{align}
	&\sum_{i=1}^{p} \left\| \int_{(i-1)T}^{iT} f\left(y_{\rho(s, y_{(i-1)T}^{\varepsilon}, \varepsilon)}^{\varepsilon},s\right) {\rm d}h(s) - \int_{(i-1)T}^{iT}f_0\left(y_{\rho(s, y_{(i-1)T}^{\varepsilon}, \varepsilon)}^{\varepsilon}\right){\rm d}s\right\| \nonumber\\
	& = \sum_{i=1}^{p} \left\|  \int_{0}^{T} f\left(y_{\rho(s, y_{(i-1)T}^{\varepsilon}, \varepsilon)}^{\varepsilon},s\right) {\rm d}h(s) - f_0\left(y_{\rho(s, y_{(i-1)T}^{\varepsilon}, \varepsilon)}^{\varepsilon}\right) T \right\| = 0. \label{eq-4.5-3}
	\end{align}
Finally, we have
$$
	\left\|\int_{pT}^{t} f\left(s, y_{\rho(s,y_s^{\varepsilon}, \varepsilon)}^{\varepsilon}\right){\rm d}h(s) - \int_{pT}^{t} f_0\left(y_{\rho(s,y_s^{\varepsilon}, \varepsilon)}^{\varepsilon}\right){\rm d}s\right\| $$
$$\leq  \left\|\int_{pT}^{t} f\left(s, y_{\rho(s,y_s^{\varepsilon}, \varepsilon)}^{\varepsilon}\right){\rm d}h(s)\right\| + \left\|\int_{pT}^{t} f_0\left(y_{\rho(s,y_s^{\varepsilon}, \varepsilon)}^{\varepsilon}\right){\rm d}s\right\| $$
$$ \leq  M (h(t)-h(pT)) + \frac{M \alpha}{T}(t-pT) $$
\begin{equation}\label{eq-4.5-4} 
 \leq M(h((p+1)T) - h(pT)) + \frac{M \alpha}{T}T = M\alpha + M\alpha = 2M\alpha.
\end{equation}
	Combining inequalities \eqref{e16}, \eqref{eq-4.5-1}, \eqref{eq-4.5-2}, \eqref{eq-4.5-3} and \eqref{eq-4.5-4}, we get 
\begin{eqnarray}
\hspace{-8mm} \left\|\int_{0}^{t} f\left(y_{\rho(s,y_s^{\varepsilon}, \varepsilon)}^{\varepsilon},s\right){\rm d}h(s) - \int_{0}^{t} f_0\left(y_{\rho(s,y_s^{\varepsilon}, \varepsilon)}^{\varepsilon}\right){\rm d}s\right\|  \hspace{-2mm} &\leq &  \hspace{-2mm} 2 M \alpha + C_2 C_3 \alpha L + C_2 C_3 \alpha L \nonumber\\
\hspace{-2mm} & \leq & \hspace{-2mm} 2 \alpha(M + C_2 C_3L). \label{eq-4.5-5}
\end{eqnarray}
	From inequalities \eqref{eq-4.5} and \eqref{eq-4.5-5}, we get
\begin{align*}
\left\|x^{\varepsilon}(t)\right.& \left.-y^{\varepsilon}(t) \right\|  \\
&\leq \varepsilon (C + C_2 C_4) \int_{0}^{t} k_3(s) \sup_{u \in [0,s]} \left\|y^{\varepsilon}(u) - x^{\varepsilon}(u)\right\| {\rm d}h(s)+\varepsilon K + \varepsilon^2 M (h(t)-h(0)),
\end{align*}
where $K= 2 \alpha(M + C_2 C_3L)$. Since $k_3$ is bounded, there exists $K'>0$ such that $\displaystyle\sup_{s\in [0,t] }k_3(s) \leq K'$. It implies that
	\begin{align*}
	\left\|x^{\varepsilon}(t) - y^{\varepsilon}(t) \right\| \leq \varepsilon (C + C_2 C_4) K'\hspace{-0.2cm} \int_{0}^{t} \hspace{-0.1cm}\sup_{u \in [0,s]} \left\|y^{\varepsilon}(u) - x^{\varepsilon}(u)\right\| {\rm d}h(s)+\varepsilon K + \varepsilon^2 M (h(t)-h(0)).
	\end{align*}
Notice that
\begin{eqnarray*}
\varepsilon (h(t)-h(0))  & \leq & \varepsilon \left(h\left(\frac{L}{\varepsilon}\right)-h(0)\right) \leq \varepsilon  \left(h\left(\left\lceil\frac{L}{\varepsilon T}\right\rceil T\right)-h(0)\right)  \leq \varepsilon \left\lceil\frac{L}{\varepsilon T}\right\rceil \alpha \\
& \leq & \varepsilon \left(\frac{L}{\varepsilon T} +1\right) \alpha \leq \left( \frac{L}{T}+\varepsilon_0\right) \alpha.
\end{eqnarray*}
Therefore, we have
\begin{align*}
	\left\|x^{\varepsilon}(t) - y^{\varepsilon}(t) \right\| & \leq \varepsilon K'' \int_{0}^{t} \sup_{u \in [0,s]} \left\|y^{\varepsilon}(u) - x^{\varepsilon}(u)\right\| {\rm d}h(s)+\varepsilon K + \varepsilon M \left( \frac{L}{T}+\varepsilon_0\right) \alpha,
	\end{align*}
where $K'' = (C + C_2 C_4) K'$. Let $\psi(s) = \displaystyle\sup_{\tau \in [0,s] } \left\|x^{\varepsilon}(\tau) - y^{\varepsilon}(\tau) \right\|$, then
		$$\psi(t) \leq \varepsilon K'' \int_{0}^{t} \psi(s) {\rm d}h(s)+\varepsilon K + \varepsilon M \left( \frac{L}{T}+\varepsilon_0\right) \alpha.$$
By Gronwall's inequality (Theorem \ref{Gronwall}), we get
		\begin{align*}
	\psi(t) & \leq e^{\varepsilon K''(h(t)-h(0))} \left(K + M \left( \frac{L}{T}+\varepsilon_0\right) \alpha\right) \cdot \varepsilon \\
	& \leq e^{K''\left( \frac{L}{T}+\varepsilon_0\right) \alpha} \left(K + M \left( \frac{L}{T}+\varepsilon_0\right) \alpha\right) \cdot \varepsilon.
	\end{align*}
		If we define $J := e^{K''\left( \frac{L}{T}+\varepsilon_0\right) \alpha} \left(K + M \left( \frac{L}{T}+\varepsilon_0\right) \alpha\right)$, then we have $\psi(t) \leq J\varepsilon$ for every $\varepsilon \in (0,\varepsilon_0]$ and $t \in [0,L/\varepsilon]$. Therefore, in particular,
	$$\left\|x^{\varepsilon}(t) - y^{\varepsilon}(t) \right\|_X = \sup_{t \in [0, L/\varepsilon]} \|x^{\varepsilon}(t)-y^{\varepsilon}(t)\|= \psi(L/\varepsilon) \leq J\varepsilon$$
 proving the desired result. \end{proof}

\begin{remark}
While our analysis currently assumes the same function $h$ across the integrals
of functions $f$ and $g$, it is feasible to employ distinct functions $h$ to derive more
general results. Moreover, given that measure equations can encompass dynamic equations
on various time scales, the use of two different functions $h$ could correspond to
different time scales. This flexibility allows for one $h$ to be associated with one time scale,
and another $h$ with a distinct time scale, thereby broadening the applicability and depth
of our results.
\end{remark}

\begin{remark}
By leveraging the correspondence between measure functional differential
equations with state-dependent delays and impulsive functional differential equations with
state-dependent delays, we can extend our findings to the latter. This enables us to establish
a version of the periodic averaging principle for these types of equations, which are
of significant practical importance. Moreover, as previously mentioned, these equations
also relate to functional dynamic equations on time scales. Utilizing this relationship, we
can apply the periodic averaging principle to these equations as well, thus broadening the
scope and impact of our results.
\end{remark}

\section{Application to extremum seeking through state-dependent delays via predictor feedback}

The work on predictor feedback has revealed applications in which the input or output delay is not constant. Among the most well-studied of such applications are additive manufacturing, shock waves in traffic flows, cooling systems, and internet congestion. While there are applications in which the delay varies over time in an open-loop fashion, as a result of drift, aging, and changes in operating conditions, arguably more interesting, and perhaps even prevalent, are situations where delay changes with time because it depends on the state of the system, which itself is time-varying. 

Since we have already proved that predictor feedback is capable of compensating delays in extremum seeking (ES) problems \cite{39}, it is natural to consider the compensation of delays that are nonconstant in ES problems. Predictors for stabilization of systems under delays that are state-dependent have already been introduced in \cite{BK:2013}. Several challenges arise in constructing predictors and in the resulting stability analysis when delays are state-dependent. One of the challenges is that, while the delay is known at each time instant, the prediction horizon, namely, the length of time it will take the input signal to reach the plant, is not a priori known at that time instant because that length of time is the inverse function of the difference between the current time and the delay. That time difference is called the ``delayed time.'' When the delay is an open-loop function, the prediction horizon can be found, in principle, as the inverse function of the delayed time. But, when the delay is state-dependent, it is not known, a priori, at what future time the input signal will reach the plant. It, therefore, takes certain transformations of the time variable and a reformulation of the integral equation for the predictor state to determine the prediction horizon and to predict the state value when the current input reaches the plant. 

This convoluted dynamic scenario, which bedevils the design process, has its manifestation in the analysis as well. If the delay is growing, and if it is growing too fast, the input signal being generated at the present instant may never reach the plant. It is, therefore, necessary that a rapid growth of the delay be preempted either by a priori assumption, in the case of open-loop nonconstant delays, or by design and restriction of the initial condition of the plant, in the particular case of state-dependent delays. In either case, to be specific, the delay rate (in the direction of growth) must not exceed unity. 
All of these considerations, which arise in predictor-based stabilization, carry over to extremum seeking. The said challenges have to be faced, dealt with, and can be overcome. This is what we do in this section. 

We propose an extremum seeking scheme for locally quadratic static maps in the presence of state-dependent delays. A predictor design using perturbation-based estimates of the unknown Gradient and Hessian of the map must be introduced to handle this variable nature of the delays, which can arise both in the input and output channels of the nonlinear map to be optimized. The demodulation signals commonly employed in ES must incorporate the state-dependent delays as well.\\

\subsection{Problem statement}

Scalar ES addresses the real-time control problem in which the aim is to find the 
maximum (or minimum) of the output $y\!\in\!\mathbb{R}$ of an unknown nonlinear static map $Q(\theta)$ by modifying the input $\theta\!\in\!\mathbb{R}$.

In this section, we assume there is an arbitrarily large \textit{state-dependent and known}
delay $D(\theta)\geq0$ in the actuation path or measurement system such that the map's 
output can be denoted by
\begin{eqnarray} \label{ch6.plant_state_dependent_delay}
y(t)&=&Q(\theta(t-D(\theta(t))))\,.
\end{eqnarray}
The delay $D(\theta)$ is a nonnegative-valued continuously differentiable function.  
For the sake of clarity, we assume here that the map is output-delayed, as depicted in the block diagram in Figure~\ref{ch6.Fig1}.
However, the results provided 
here can be directly
generalized to the input-delay case once any input time-delay can be moved to the output
of the static map. The general setup when input delays $D_{\rm{in}}(\theta(t))$ and output delays $D_{\rm{out}}(\theta(t))$ occur
concurrently can also be handled, by assuming that the total delay to be compensated is 
$D(\theta(t))=D_{\rm{in}}(\theta(t))+D_{\rm{out}}(\theta(t))$, with $D_{\rm{in}}(\theta(t))\,,D_{\rm{out}}(\theta(t))\geq 0$.
%
%
%
\begin{figure*}[t]
\begin{center}
\vspace{-0.5cm}
\includegraphics[scale = 0.3]{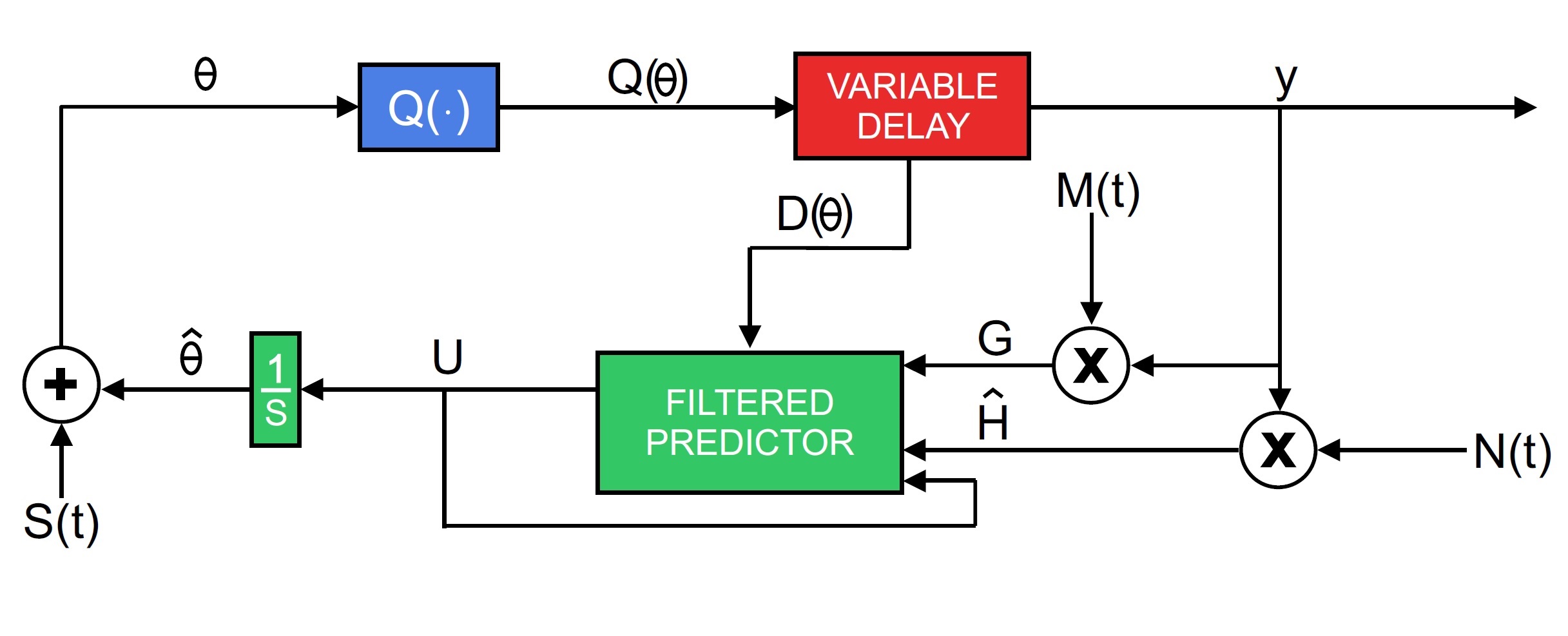}
\caption{Block diagram of the extremum seeking control system with nonconstant delays.}\label{ch6.Fig1}
\end{center}
\end{figure*}
%

Without loss of generality, let us consider the maximum seeking
problem such that the maximizing value of $\theta$ is denoted by
$\theta^*$. For the sake of simplicity, we also assume that the
nonlinear map is at least locally quadratic: 
\begin{eqnarray} \label{ch6.nonlinear_map}
Q(\theta)&=&y^*+\frac{H}{2}(\theta-\theta^*)^2\,,
\end{eqnarray}
where besides the constants $\theta^*\!\in\!\mathbb{R}$ and $y^*\!\in\!\mathbb{R}$ being unknown,
the scalar $H\!<\!0$ is the unknown Hessian of the static map.
By plugging (\ref{ch6.nonlinear_map}) into (\ref{ch6.plant_state_dependent_delay}), we obtain the \textit{quadratic static map with delay} of interest:
\begin{eqnarray} \label{ch6.delayed_output}
y(t)&=&y^*+\frac{H}{2}(\theta(t-D(\theta(t)))-\theta^*)^2\,.
\end{eqnarray}
This problem is particularly important in distributed extremum seeking control of mobile robots or formation control with multiple agents, where the magnitude of the delay may depend on the distance of the robots/agents from the operator interface, or from the virtual leaders \cite{34},\cite{47}. 

One more interesting feature is that the delay function can even depend on unmeasured signals, but respecting the condition that at least the analytical expression (parametrization) for the delays is still known and the unmeasured signals can be estimated in the average sense. Without loss of generality, let us assume here that the delay function in (\ref{ch6.plant_state_dependent_delay}) is:  
\begin{equation} \label{ch6.state_dependent_delay}
D(\theta(t))=Q'(\theta(t))=H(\theta(t)-\theta^*)\,,
\end{equation}  
with $Q(\theta)$ defined in (\ref{ch6.nonlinear_map}). In this case, the delay function is exactly the expression of the gradient of the map we want to maximize.   

Considering the continuously differentiable functions of the \textit{delay time} $\phi(t)$ and \textit{prediction time} $\phi^{-1}(t)$, defined as
\begin{align} \label{ch6.eq19}
\phi(t) &:= t - D(\theta(t))\,, 
\end{align}
%
where $\phi^{-1}(t)$ is the inverse function of $\phi(t)$. We assume the invertibility of $\phi(t)$ along the section. When the delay is constant, $\phi^{-1}(t)\!=\!t\!+\!D$. The following assumption is also made. 


\begin{assum} \label{Assumption 1_ch.6}
The output time-varying delay $D(\theta(t)) $ fulfills
\begin{eqnarray}
-\infty < \underline{d} < \dot{D}(\theta(t)) < \overline{d} < 1,
\label{ch6.eq23}
\end{eqnarray}
where $\underline{d}$ and $\overline{d}$ are constants. 
\end{assum}

This is a natural consequence of (\ref{ch6.eq19}). 
In addition, from Assumption~\ref{Assumption 1_ch.6} it follows that $\phi(t)$ is strictly increasing, \textit{i.e.}, 
\begin{eqnarray}
\phi(t) \leq t\,,  \qquad  \forall t \geq 0.
\label{ch6.eq21}
\end{eqnarray}
Inequality (\ref{ch6.eq21}) can be alternatively states as 
\begin{eqnarray}
\phi^{-1}(t)-t >0.
\label{ch6.eq21_enrolation}
\end{eqnarray}

\subsection{Probing and demodulation signals}
\label{ch6.sec:signals}

Let $\hat{\theta}$ be the estimate of $\theta^*$ and
\begin{eqnarray}\label{ch6.estimation_error}
\tilde{\theta }(t) = \hat{\theta}(t) - \theta ^{*}
\end{eqnarray}
be the \textit{estimation error}. From Figure~\ref{ch6.Fig1} and noting that $\dot{\hat{\theta}}(t)=\dot{\tilde{\theta}}(t)$, the \textit{error dynamics} can be written as
\begin{eqnarray}
\dot{\tilde{\theta }}(t-D(\theta(t))) = U(t-D(\theta(t)))\,, \label{ch6.distinct_delayed_equation}
\end{eqnarray}
where $U(t)$ is the control signal. Additionally, we have 
\begin{eqnarray}
G(t) = M(t)y(t), \quad \theta (t) = \hat{\theta }(t) + S(t), \label{ch6.G}
\end{eqnarray}
with 
the probing and demodulation signals being given by: 
\begin{eqnarray}
S(t) = a\sin (\omega t), \quad M(t)=\frac{2}{a}\sin (\omega (t-D(\theta(t)))), \label{ch6.dither_signalS} 
\end{eqnarray}
with nonzero perturbation amplitude $a$ and frequency
$\omega$.
%
%
The signal
\begin{equation}\label{ch6.hessian_estimate_H}
\hat{H}(t) = N(t)y(t)
\end{equation}
is applied to obtain an estimate of the unknown Hessian $H$, where
the demodulating signal $N(t)$ is given by
\begin{equation}\label{ch6.hessian_estimate_N}
N(t) = -\frac{8}{a^{2}}\cos (2\omega (t-D(t))).
\end{equation}
In \cite{GKN:2012}, it was proved that
\begin{equation}\label{ch6.averagingH}
\frac{1}{T}\int_{0}^{T }N(\sigma )yd\sigma = H, \quad T = 2\pi/\omega,
\end{equation}
if a quadratic map as in (\ref{ch6.nonlinear_map}) is considered. In
other words, it's average version is $\hat H_{\rm{av}}=(Ny)_{\rm{av}}=H$.

\subsection{Predictor feedback for state-dependent delays} \label{predictor_design_gold}

Now, we conveniently redefine the estimation error introduced in (\ref{ch6.estimation_error}) as
\begin{equation} \label{ch6.estimation_error_state_dependent_delay}
\tilde{\theta}(t)=\hat{\theta}(t-D(\theta(t)))-\theta^*\,,
\end{equation}  
and recalling that $\dot{\tilde{\theta}}(t)=\dot{\hat{\theta}}(t-D(\theta(t)))=U(t-D(\theta(t)))$, we can write the associated error dynamics as
\begin{equation} \label{ch6.error_dynamics_state_dependent_delay}
\dot{\tilde{\theta}}(t)=U(t-D(\theta(t)))\,,
\end{equation}  
where $U(t)$ is the control signal. 

Similarly to \cite{40}, but now assuming (\ref{ch6.plant_state_dependent_delay}) and (\ref{ch6.estimation_error_state_dependent_delay}), the following average version of the signal $G(t)$ in (\ref{ch6.G}) can be derived straightforwardly by  
\begin{equation} \label{ch6.Gav_state_dependent_delay}
G_{\rm{av}}(t)=H\tilde{\theta}_{\rm{av}}(t)\,,
\end{equation}

From (\ref{ch6.error_dynamics_state_dependent_delay}) and (\ref{ch6.Gav_state_dependent_delay}), the following average models can also be obtained
\newpage
\begin{align}
\dot{\tilde{\theta}}_{\rm{av}}(t)&=U_{\rm{av}}(\phi(t))\,, \label{ch6.thetadot-state}\\
\dot G_{\rm{av}}(t)&= H U_{\rm{av}}(\phi(t))\,, \label{ch6.Gdot-state}\\
 \phi(t)&=t-D(G_{\rm{av}}(t))\,.\label{ch6.delay-state}
\end{align}
since $\theta(t-D(\theta(t)))=\hat{\theta}(t-D(\theta(t)))+S(t-D(\theta(t)))= \tilde{\theta}(t)+ \theta^* + a \sin(\omega (t-D(\theta(t))))$ from the definitions in (\ref{ch6.G}) and (\ref{ch6.estimation_error_state_dependent_delay}). Hence, the average signal $[\theta(t-D(\theta(t)))-\theta^*]_{\rm{av}}=[\tilde{\theta}(t)]_{\rm{av}}$ since $[a \sin(\omega (t-D(\theta(t))))]_{\rm{av}}=0$. By multiplying both sides of the former equation by $H$, from (\ref{ch6.state_dependent_delay}) and (\ref{ch6.Gav_state_dependent_delay}), we can write, with some abuse of notation,
$[D(\theta(t))]_{\rm{av}}\!=\!G_{\rm{av}}(t)\!:=\! D(G_{\rm{av}}(t))$. The term
$U_{\rm{av}}: [\phi(t_0)\,, \infty)\!\to\! \mathbb{R}\,, t \!\geq\! t_0 \!\geq\! 0\,,
D \!\in\! C^{1}(\mathbb{R}; ~\mathbb{R}_{+})$ is the average control of $U \!\in\! \mathbb{R}$.

If we simply apply a proportional gradient control law $U(t)=KG(t)$ to (\ref{ch6.Gdot-state}), with $k>0$ being a positive constant, we could write the following expression
\begin{equation} \label{Mesquita1}
\dot{G}_{\rm{av}}(t)=HkG_{\rm{av}}(t-D({G}_{\rm{av}}(t)))\,.
\end{equation}  
From (\ref{Mesquita1}),  it is clear that the equilibrium of the average
system is not necessarily stable for arbitrary values of the
delay $D({G}_{\rm{av}}(t))$. This reinforces the necessity of applying the prediction to stabilize the system.

In this sense, if we implement a proportional control law of the implementable future state $U(t)=kG(\phi^{-1}(t))$, the average closed-loop system would become,
\begin{equation} \label{Mesquita2}
\dot{G}_{\rm{av}}(t)=HkG_{\rm{av}}(t)\,,
\end{equation}  
which has an exponentially stable equilibrium $G_{\rm{av}}^{\rm{e}}(t)\equiv0$ since $Hk<0$. 

The main challenge in the case of systems with state-dependent delays is the determination of the predictor state. For
systems with constant delays, $D = \text{const}$, the predictor of the state
$G(t)$ is simply defined as $P(t) = G(t+D)$. For systems with state-dependent delays finding the predictor $P(t)$ is much trickier. The
time when $U$ reaches the system depends on the value of the state at that time, namely, the following implicit relationship holds
$P(t) = G(t + D(P(t)))$ (and $G(t) = P(t-D(G(t)))$).

Hence, by redefining the prediction time as $\sigma(t)=\phi^{-1}(t)$ the inversion of the time variable $t \to t-D(G_{\rm{av}}(t))$ in
$t\!\to\!t+D(P_{\rm{av}}(t))$, the following hold:
\begin{align}
P_{\rm{av}}(t)&=G_{\rm{av}}(\sigma(t))\,, \label{ch6.pav-state}\\
 \sigma(t)&=t+D(P_{\rm{av}}(t))\,,\label{ch6.sigma-state}
\end{align}
where $P_{\rm{av}}(t)$ is the associated predictor state. 

Differentiating (\ref{ch6.pav-state}) and (\ref{ch6.sigma-state}) and using (\ref{ch6.Gdot-state}), we arrive at \cite{BK:2013_b}: 
\begin{equation} \label{ch6.bucetasso1}
\frac{dG_{\rm{av}}(\sigma(t))}{dt}=H U_{\rm{av}}(t) \frac{d\sigma(t)}{dt}\,,
\end{equation}
and
\begin{equation} \label{ch6.bucetasso2}
\dot{\sigma}(t)= \frac{1}{1-H\nabla D(P_{\rm{av}}(t))U_{\rm{av}}(t)}\,,
\end{equation}
with $\dot{D}\!=\!\nabla D \dot{G}_{\rm{av}}\!\!=\!\!\nabla D HU_{\rm{av}}$, $\nabla$ being the gradient operator. 

The following implicit integral relations \cite{BK:2013_b} are derived for the predictor state and the prediction time by integrating
(\ref{ch6.bucetasso1}) and (\ref{ch6.bucetasso2}) for all delay interval $\phi(t)\leq\Theta\leq t$:
\begin{align}
P_{\rm{av}}(\Theta)&=G_{\rm{av}}(t) + \int_{\phi(t)}^{\Theta} 
\frac{HU_{\rm{av}}(s)}{1-H\nabla D(P_{\rm{av}}(s))U_{\rm{av}}(s)}ds\,, \label{ch6.pav-state_cu}\\
 \sigma(\Theta)&=t+ \int_{\phi(t)}^{\Theta} \frac{1}{1-H\nabla D(P_{\rm{av}}(s))U_{\rm{av}}(s)}ds \,.\label{ch6.sigma-state_cu}
\end{align}
Regarding the nominal feedback control gain $k>0$ for the plant free of delays, the predictor feedback control law can be written as  
\begin{align}
U_{\rm{av}}(t)\!&=\! k\left[G_{\rm{av}}(t) \!+\! \int_{\phi(t)}^{t} 
\frac{HU_{\rm{av}}(s)}{1-H\nabla D(P_{\rm{av}}(s))U_{\rm{av}}(s)}ds\right]. \label{ch6.control_law_avg_cu}
\end{align}
From Assumption~\ref{Assumption 1_ch.6}, the predictor feedback control law (\ref{ch6.control_law_avg_cu}) is subject to the following feasibility condition \cite{BK:2013_b}: 
\begin{equation} \label{ch6.feasibility_condition}
H\nabla D(P_{\rm{av}}(t))U_{\rm{av}}(t) < 1\,.
\end{equation}
This is due to the fact that the prediction signal can be only generated if (\ref{ch6.pav-state_cu}) and (\ref{ch6.sigma-state_cu}) are
well-posed. If the condition (\ref{ch6.feasibility_condition}) is violated, the control signal is directed toward the opposite direction to the plant. 

Thus, the non-average version for the filtered predictor-based control law is given by
\begin{align} \label{ch6.control_state_dependent1}
U(t)&=\frac{c}{s+c}\left \{k \left[ G(t) \!+\! \Gamma(t)  \right]    \right\}\,, \quad k\!>\!0 \,, \quad c\!>\!0 \,, \\
\Gamma(t) &= \hat{H}(t) \int_{\phi(t)}^{t} 
\frac{U(\tau)}{1-\hat{H}(\tau)\nabla D(G(\tau))U(\tau)}d\tau\,. \label{ch6.control_state_dependent2gold}
\end{align}

The stability proof can be established using an analogous average infinite-dimensional representation of the actuator state \cite{40}, where the infinite dimensional dynamics of the delay is represented by a transport partial differential equation (PDE) with the propagation speed defined as 
\begin{equation} \label{speedpropagationmesquita}
\pi(x,t)=\frac{1+x(\dot{\sigma}(t)-1)}{\sigma(t)-1}\,.
\end{equation}
The stability analysis for the closed-loop system is carried out in the next section. 

\subsection{Stability analysis and convergence to the extremum}

The infinite-dimensional representation for the system (\ref{ch6.thetadot-state})--(\ref{ch6.delay-state}) is 
\begin{eqnarray}
%
\dot{\tilde{\theta }}_{\rm{av}}\left ( t \right ) &=& u _{\rm{av}}\left ( 0,t \right ),
\label{ch6.eq57_state_dependent}\\
%
\partial_{t}u_{\rm{av}}\left ( x,t \right ) &=& \pi(x,t)\partial_{x}u_{\rm{av}}\left ( x,t \right ),  ~~x \in (0\,,1),
\label{ch6.eq58_state_dependent}\\
%
\frac{du_{\rm{av}}\left ( 1,t \right )}{dt} &=& - c u_{\rm{av}}(1,t) + ckH\left[ \tilde{\theta}_{\rm{av}} \!+\! (\sigma(t)\!-\! t) \!\! \int_{0}^{1} \!\!\!u_{\rm{av}}(x,t) dx  \right]\,,~~~~~
\label{ch6.eq59_state_dependent}
%
\end{eqnarray}
which can be mapped into the \textit{target system} \cite{BK:2013_b}: 
\begin{eqnarray}\label{ch6.eq65_state_dependent}
\dot{\tilde{\theta}}_{\textrm{av}}(t) &=& kH \tilde{\theta}_{\rm{av}}(t) + w(0,t)\,, \\
w_{t}(x,t) &=& \pi(x,t) w_{x}(x,t), \quad x \in (0, 1)\,, \label{ch6.eq66_state_dependent} \\
w(1,t) &=& -\frac{1}{c}\partial_{t}u_{\rm{av}}(1,t)\,, \label{ch6.eq67_state_dependent}
\end{eqnarray}
with the help of the following invertible backstepping transformation:
\begin{align}
w(x,t) &= u_{\textrm{av}}(x,t) - kH \left[ \tilde{\theta}_{\textrm{av}}(t) + (\sigma(t)-t)\int_{0}^{x} u_{\textrm{av}}(\sigma,t) d\sigma \right]\,.
\label{ch6.eq64_state_dependent}
\end{align}
Considering the next Lyapunov-Krasovskii functional 
\begin{equation}\label{ch6.Lyapunov_state_dependent}
V(t)\!=\!\frac{{\tilde\theta}^2_{\rm{av}}(t)}{2}+\frac{a}{2}\int_{0}^{1}e^{bx}w^2(x,t) dx + \frac{1}{2}w^2(1,t)\,,
\end{equation}
with appropriate constants $a>0$ and $b>0$, the proof of stability can be provided following analogously the same procedure for constant delays \cite{40}. 
The main difference is that the delay time $\phi(t)$ depends on the state as well as the prediction time
$\phi^{-1}(t)$---here denoted by $\sigma(t)$. Moreover, the infinite-dimensional averaging theorem to be invoked here is the proposed Theorem~\ref{3} of Section~\ref{teoremaprincipalmesquita} rather than
the Hale and Lunel averaging theorem in \cite{18}. The latter one is limited to constant delays.  

Using (\ref{ch6.Lyapunov_state_dependent}), one can show that there exists a sufficiently small $\lambda>0$ such that 
\begin{equation}
\dot{V} \leq -\lambda V\,.
\end{equation}
From this result, along with 
\begin{equation} \label{medvedev}
m_1 \Psi(t) \leq V(t) \leq m_2 \Psi(t)
\end{equation}
where $\Psi(t)=\tilde{\theta}_{\rm{av}}^2(t) + \|w\|^2 + w(1,t)^2$, it follows that 
\begin{equation}
\dot{\Psi}(t) \leq M e^{-t/M} \Psi(0)\,,
\end{equation}
for a sufficiently large $M>0$. In (\ref{medvedev}), we denote the spatial $\mathcal{L}_{2}[0,1]$ norm of the PDE state $w(x, t)$ as $\|w(t)\|^{2} \coloneqq \int_{0}^{1} w^2(x,t) \,dx$.

From the invertibility of the backstepping transformation (\ref{ch6.eq64_state_dependent}) \cite{28}, it follows that
\begin{align} \label{Alcaraz}
& {} |\tilde{\theta}_{\rm{av}}(t)|^{2}\!+\!\|u(t)\|^{2}\!+\!|u(1,t)|^{2} \\ \nonumber
& {}\leq  \bar{M} e^{-t/\bar{M}}  (|\tilde{\theta}_{\rm{av}}(0)|^{2}\!+\!\|u_{\rm{av}}(0)\|^{2}\!+\!|u_{\rm{av}}(1,0)|^{2})\,, \quad \bar{M}>0\,, \quad \forall t \!\geq\! 0 \,. 
\end{align}
%

From (\ref{Alcaraz}), the origin of the average closed-loop system with transport PDE and nonconstant speed propagation is exponentially stable. Then, according to the proposed Averaging Theorem~\ref{3} of Section~\ref{teoremaprincipalmesquita}, for $\omega$ sufficiently large, the closed-loop system (\ref{ch6.thetadot-state})--(\ref{ch6.delay-state}), with $U(t)$ in (\ref{ch6.control_state_dependent1}) and (\ref{ch6.control_state_dependent2gold}), has a unique exponentially stable periodic solution around its equilibrium (origin) satisfying:
\begin{equation}
\begin{aligned}
& {} \left(|\tilde{\theta}^{\Pi}(t)|^{2}+\|u^{\Pi}(t)\|^{2}+|u^{\Pi}(1,t)|^{2} \right)^{1/2} \leq \mathcal{O}(1/\omega)\,, \quad \forall t \geq 0\,,
\label{extrachapter.eq:order_period}
\end{aligned}
\end{equation}
 $\vartheta^{\Pi}(t)$, $u^{\Pi}(x,t)$ represents the unique locally exponentially stable periodic solution in $t$ with a period $\Pi \coloneqq 2\pi/\omega$. In (\ref{extrachapter.eq:order_period}), the big-$\mathcal{O}$ notation\footnote{As defined in \cite{27}, a vector function $f(t,\epsilon) \in \mathbb{R}^{n}$ is said to be of order $\mathcal{O}(\epsilon)$ over an interval $[t_{1},t_{2}]$, if $\exists k, \bar{\epsilon}$ : $|f(t,\epsilon)| \leq k\epsilon , \forall t \in [t_{1},t_{2}]$ and $\forall \epsilon \in [0,\overline{\epsilon}]$. In general, we do not provide precise estimates for the constants $k$ and $\bar{\epsilon}$, and we use $\mathcal{O}(\epsilon)$ to be interpreted as an order-of-magnitude relation for sufficiently small $\epsilon$.} is used to characterize the ultimate residual set.

On the other hand, the asymptotic convergence to a neighborhood of the extremum point is proved taking the absolute value of the second expression in (\ref{ch6.G}), with $S(t)$ defined in (\ref{ch6.dither_signalS}), after replacing 
$\hat{\theta}(t-D(\theta(t)))=\tilde{\theta}(t) + \theta^{*}$ from (\ref{ch6.estimation_error_state_dependent_delay}), resulting in:
\begin{equation}
    |\theta(t)-\theta^{*}| = |\tilde{\theta}(t) + a\sin{(\omega t)}|. \label{extrachapter.vasin2}
\end{equation} 
Considering  \eqref{extrachapter.vasin2} and writing it by adding and subtracting the periodic solution $\tilde{\theta}^\Pi(t)$, it follows
\begin{equation}
    |\theta(t)-\theta^{*}| = |\tilde{\theta}(t)-\tilde{\theta}^\Pi(t)+\tilde{\theta}^\Pi(t)  + a\sin{(\omega t)}|. \label{extrachapter.vasin2_FONSECA}
\end{equation} 
By applying the proposed Averaging Theorem~\ref{3} of Section~\ref{teoremaprincipalmesquita}, one can conclude that  $\tilde{\theta}(t)\!-\!\tilde{\theta}^\Pi(t)\!\to\!0$ exponentially. Consequently, 
\begin{equation}
    \limsup_{t \to \infty} |\theta(t)\!-\!\theta^{*}| = \limsup_{t \to \infty} |\tilde{\theta}^{\Pi}(t) + a\sin{(\omega t)}| \label{extrachapter.Order_Theta(t)1}.
\end{equation}
Finally, utilizing the relationship (\ref{extrachapter.eq:order_period}), we ultimately arrive at
\begin{align}
\limsup_{t \to \infty} |\theta(t)-\theta^{*}| = \mathcal{O}\left(a+1/\omega\right)\,. \label{Order_Theta(t)}
\end{align}
%


In order to show the convergence of the output $y(t)$, we can follow the same steps employed for $\theta(t)$ by plugging (\ref{Order_Theta(t)}) into  (\ref{ch6.delayed_output}), such that
\begin{equation}
\begin{split}
\limsup_{t \to \infty} |y(t)-y^{*}| = \limsup_{t \to \infty} |H\tilde{\theta}^{2}(t) + Ha^{2}\sin{(\omega t)}^{2}|. \label{extrachapter.Order_y(t)1}
\end{split}
\end{equation}
Hence, by rewriting (\ref{extrachapter.Order_y(t)1}) in terms of $\tilde{\theta}^{\Pi}(t)$ and again with the help of (\ref{extrachapter.eq:order_period}), we finally get
\begin{align}
\limsup_{t \to \infty} |y(t)-y^{*}| = \mathcal{O}\left(a^{2}+1/\omega^{2}\right). \label{order_y(t)}
\end{align}
Hence, from (\ref{Order_Theta(t)}) and (\ref{order_y(t)}) we rigorously conclude the convergence of $\theta(t)$ and $y(t)$ to a small neighborhood of the maximizer $\theta^*$ and the extremum point $y^*$, respectively. In the next section, we illustrate this theoretical result through a numerical academic example. 

\subsection{Simulation example}

The numerical simulation considers the quadratic map described in (\ref{ch6.delayed_output}), with simulation parameters selected according to Table \ref{tb:margins}. In particular,
we have assumed the same delay suggested in \cite[Example~15.3, page 252]{BK:2013} equal to $D(\theta(t))=\frac{1}{2}\sin(5\theta(t))^2$. Despite its small amplitude, such a nonconstant delay is enough to destabilize the closed-loop system if not properly compensated with the predictor presented in Section~\ref{predictor_design_gold}. In this case, the delay time $\phi(t)$ and the prediction time $\phi^{-1}(t)$ are illustrated in
Figure~\ref{delay_prediction_times}. 

As shown in \cite[Part III]{BK:2013}, state-dependent delays can be described by a transport PDE containing a nonconstant speed propagation
$\pi(x,t)$ in (\ref{speedpropagationmesquita}) given by 
\begin{align}
&    \Theta(t) = \alpha(0,t)=\theta(t-D(\theta(t)))\,,\label{Theta_p_actuator}\\
&    \partial_{t}\alpha(x,t) = \pi(x,t) \partial_{x}\alpha(x,t)\,, \quad x \in (0\,,1)\,, \label{wavekv_actuator}\\
&    \alpha(1,t) = \theta(t)\,. \label{theta_actuator}
\end{align}
This is exactly as we generate the \textcolor{black}{red block} ``Variable Delay'' in Figure~\ref{ch6.Fig1} where $\alpha:[0,1]\times\mathbb{R}_{+}\rightarrow\mathbb{R}$ and
$\alpha(x,t) = \theta(\phi(t + x(\phi^{-1}(t) - t)))$. 
%

\begin{small}
\begin{table}[htb!] 
\begin{center}
\caption{Simulation parameters} \label{tb:margins}
\begin{tabular}{cccc}
 & Symbol & Description & Value \\\hline
                      & $K$ & controller gain & 0.2 \\
    Controller        & $c$ & controller frequency [rad/s] & 2\\
    parameters        & $a$ & pertubation amplitude & 0.2\\
                      & $\omega$ & pertubation frequency [rad/s] & 8\\
                      
     \hline
                      & $\theta^{*}$ & optimizer static map & 8 \\
    System            & $y^{*}$ & optimal value static map & 64 \\
    parameters        & $H$ & Hessian  & -1 \\
                      & $D(\theta(t))$ & state-dependent delay & $D(\theta(t))=\frac{1}{2}\sin(5\theta(t))^2$ \\
\end{tabular}
\end{center}
\end{table}
\end{small}
\begin{figure} [htb!]
\begin{center}
\includegraphics[width=7.4cm]{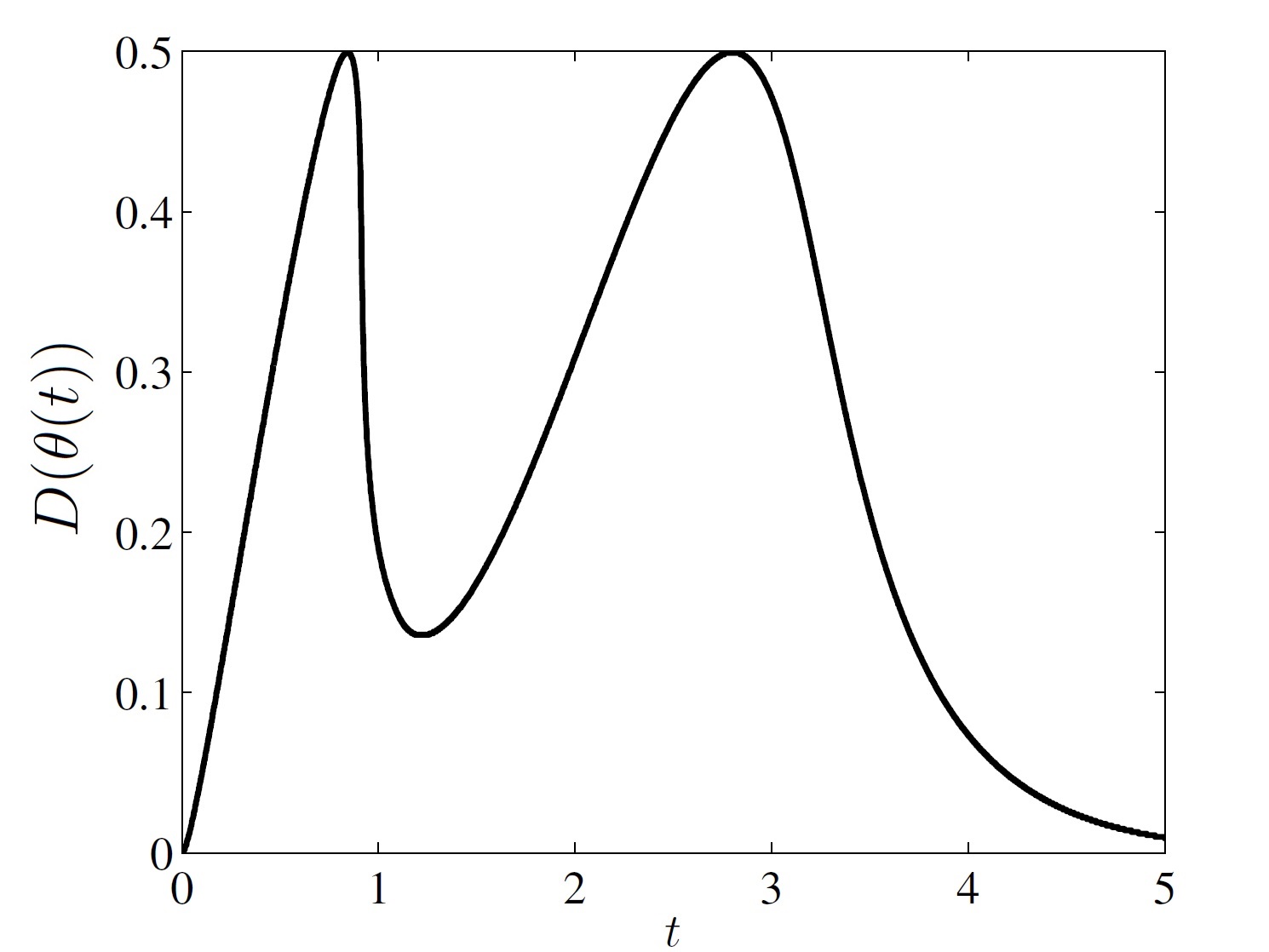}
\includegraphics[width=7.6cm]{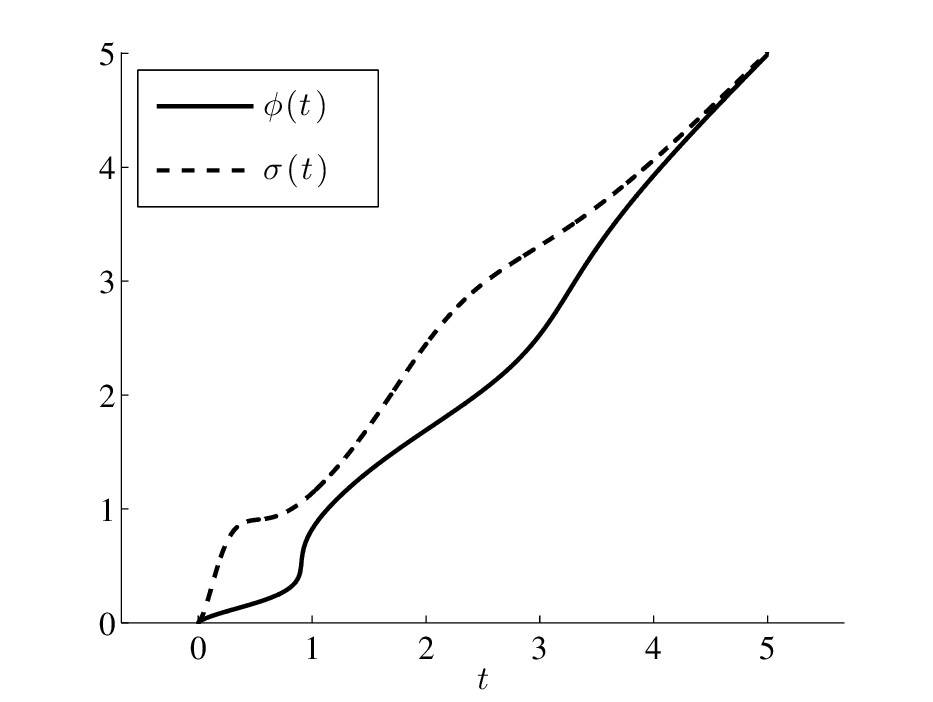} 
\caption{The state-dependent-delay $D(\theta(t))=\frac{1}{2}\sin(5\theta(t))^2$, the delayed time $\phi(t)=t-\frac{1}{2}\sin(5\theta(t))^2$, and the prediction time $\sigma(t)=\phi^{-1}(t)$.} 
\label{delay_prediction_times}
\end{center}
\end{figure}
%

Figure \ref{fig:3dtheta}  
corresponds to the numerical plot of the closed-loop system evolution in a three-dimensional space, taking into account the domain $x \in [0,1]$ and the time $t$. The curves in blue and in red show the convergence of $\Theta(t)=\alpha(0,t)$ and $\theta(t)=\alpha(1,t)$ to a small neighborhood around the optimizer $\theta^{*} =8$, respectively.

%
%

%
%
\begin{figure*} [htb!]
\begin{center}
\includegraphics[width=16cm]{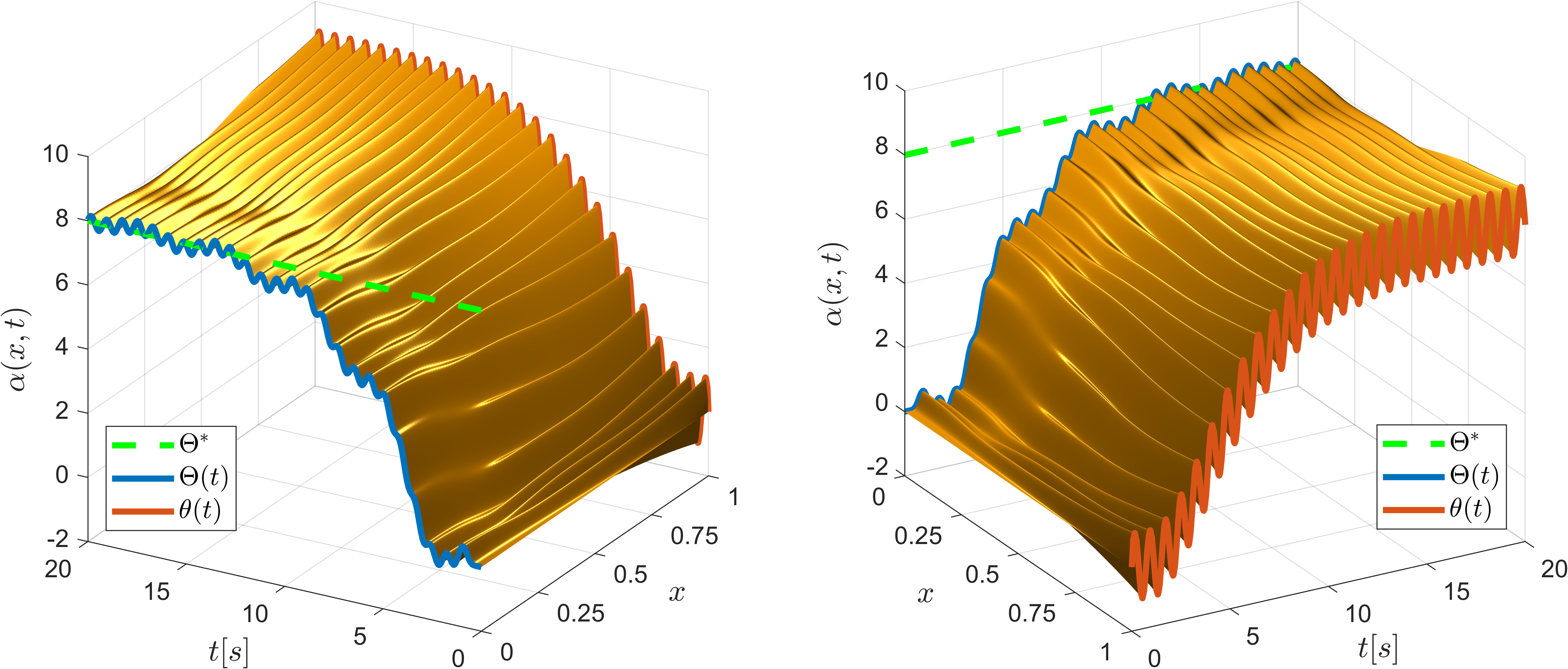} 
\caption{Convergence of the state $\alpha(x,t)$ employed to represent the delayed system with state-dependent delays \cite{BK:2013} in a three-dimensional space. In blue, we can check $\Theta(t)=\alpha(0,t)$, while in red, we have $\theta(t)=\alpha(1,t)$, both reaching a neighborhood of $\theta^\ast$.} 
\label{fig:3dtheta}
\end{center}
\end{figure*}
%
%
%

Finally, Figures \ref{fig:y_t} and \ref{fig:u_t} show the convergence of $y(t)$ and $U(t)$ to small neighborhoods of $y^{*}$ and $0$, as expected.
\begin{figure} [htb!]
\begin{center}
\includegraphics[width=11cm]{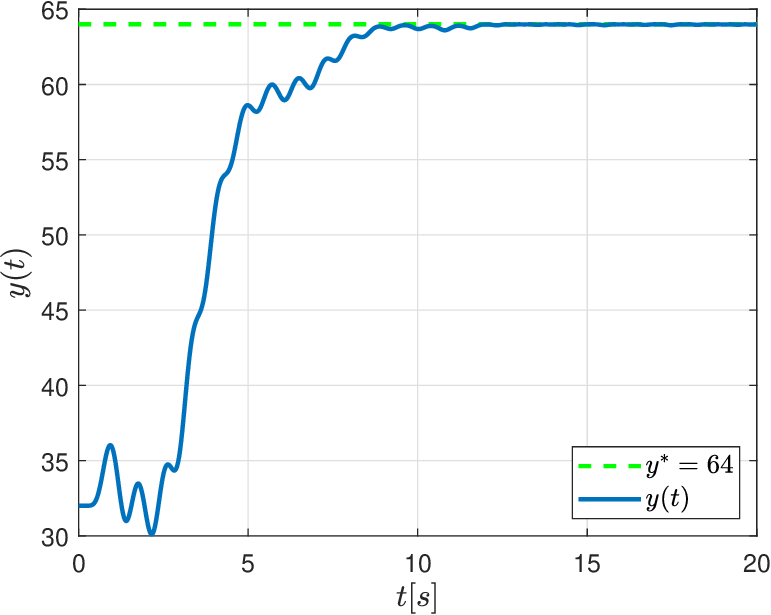}
\caption{Convergence of the output $y(t)$ to a neighborhood of $y^{*}$.} 
\label{fig:y_t}
\end{center}
\end{figure}
\begin{figure} [htb!]
\begin{center}
\includegraphics[width=11cm]{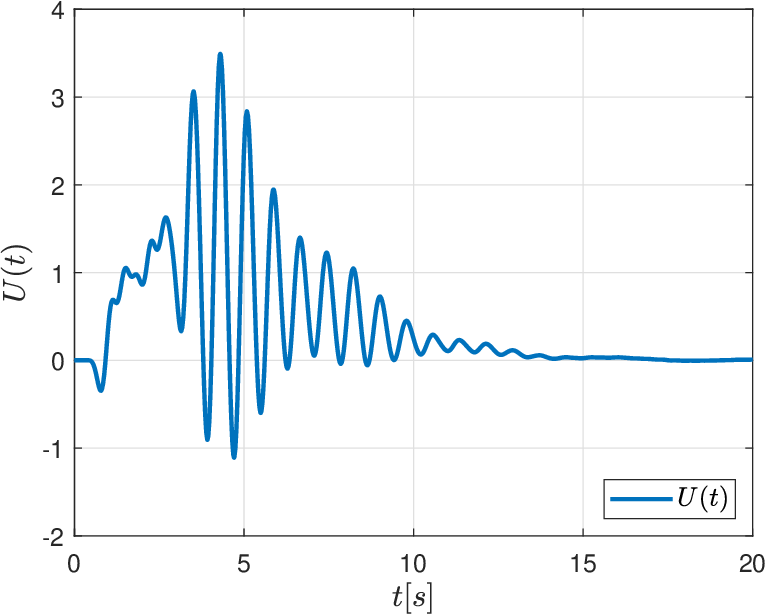}  
\caption{Convergence of the control signal $U(t)$ to a neighborhood of $0$.} 
\label{fig:u_t}
\end{center}
\end{figure}
%
%

\newpage
\section{Conclusions}

This paper embarked on a detailed exploration of the intricate dynamics of measure
functional differential equations with state-dependent delays. Our initial efforts centered
on establishing the existence and uniqueness of solutions for these equations, treating them
as fixed points of the solution operator. This foundational approach set the stage
for our pioneering investigation into a periodic averaging principle---a novel contribution
that has not been previously documented in the literature.

The core challenges associated with this type of problem stem from the regularity
of the functions involved and the complexities of the phase spaces utilized. Given that
equations involving measures may include discontinuous functions, and considering that
the solution itself acts as the state in the function of delay which is then recomposed
with the function, we encountered significant hurdles in deriving meaningful results. Despite
these challenges, the versatility of measure functional differential equations allows
them to be conceptualized as other types of equations, such as impulsive equations with
state-dependent delays and difference equations with state-dependent delays. This insight
extends the applicability of our findings, presenting new opportunities for research
in these areas. While the direct implications for these equations are not discussed in detail
here due to their straightforward nature and the potentially uninteresting proofs, they are
readily derivable by the interested reader from the foundational results presented in this
study.

Further advancing our research, we applied our theoretical developments to a specific
case involving a partial differential equation (PDE) and demonstrated their relevance to a
real-time optimization strategy known as extremum seeking. Notably, prior to our work,
there were no averaging theorems capable of analyzing the stability of extremum seeking
algorithms under the conditions of state-dependent delays. With these new tools at our
disposal, we successfully established the stability of a novel extremum seeking algorithm
that utilizes predictor feedback for static maps affected by state-dependent delays.

In conclusion, the contributions of this paper not only address a gap in the existing
mathematical literature but also offer practical tools for engineers and scientists working
with complex dynamical systems. The theoretical principles we have developed have
significant implications for the design and stability analysis of advanced control systems,
potentially guiding future innovations in various technical and scientific fields.

\section*{Acknowledgment}

The authors would like to dedicate this work to the memory of Professor Dr. Hernán R.
Henríquez, who passed away on June 2022, 2024. This manuscript was discussed with him before
his death and Professor Hernán gave many important advices for the improvement of this
paper. The authors are very grateful to Professor Henríquez for the fruitful discussions.
The authors have a lot of good memories of him, and will always remember him with
love, gratitude and respect.

\end{document}